\pgfplotsset{compat=1.18}
\renewcommand*{\backref}[1]{}
\renewcommand*{\backrefalt}[4]{\ \tiny 
  \ifcase #1 ({\color{red} \bf NOT CITED.})%
  \or    ($\uparrow$#2)%
  \else   ($\uparrow$#2)%
  \fi}
\declaretheorem{theorem} 
\declaretheorem[sibling=theorem]{proposition} 
\declaretheorem[sibling=theorem]{lemma}
\declaretheorem[sibling=theorem, style=remark]{remark}
\setlist[enumerate,1]{label={\upshape(\alph*)},ref=\alph*}
\newcommand{\N}{\mathbb{N}} \newcommand{\Z}{\mathbb{Z}} \newcommand{\Q}{\mathbb{Q}} \newcommand{\R}{\mathbb{R}} 
\newcommand{\T}{\mathbb{T}}
\newcommand{\st}{\;\mathord{:}\;}
\renewcommand{\setminus}{\smallsetminus}
\renewcommand{\epsilon}{\varepsilon}
\newcommand{\arxiv}[2]{\href{http://arxiv.org/abs/#1}{arXiv: {#1} [{#2}]}}
\newcommand{\doi}[1]{\href{http://doi.org/#1}{\tt doi}} %{DOI:{#1}}}
\newcommand{\directlink}[1]{\href{#1}{\tt URL}}
\newcommand{\MRev}[1]{\href{https://mathscinet.ams.org/mathscinet-getitem?mr=#1}{\tt MR}} 
\newcommand{\Zbl}[1]{\href{https://zbmath.org/?q=an:#1}{\tt Zbl}} %{\tt zbMATH}}
\DeclareFontFamily{U} {MnSymbolA}{} % DO NOT REMOVE DECLARATIONS FOR MnSymbolA EVEN IF USING MnSymbolC ONLY.
\DeclareFontShape{U}{MnSymbolA}{m}{n}{
   <-6> MnSymbolA5
   <6-7> MnSymbolA6
   <7-8> MnSymbolA7
   <8-9> MnSymbolA8
   <9-10> MnSymbolA9
   <10-12> MnSymbolA10
   <12-> MnSymbolA12}{}
\DeclareFontShape{U}{MnSymbolA}{b}{n}{
   <-6> MnSymbolA-Bold5
   <6-7> MnSymbolA-Bold6
   <7-8> MnSymbolA-Bold7
   <8-9> MnSymbolA-Bold8
   <9-10> MnSymbolA-Bold9
   <10-12> MnSymbolA-Bold10
   <12-> MnSymbolA-Bold12}{}
\DeclareSymbolFont{MnSyA} {U} {MnSymbolA}{m}{n}
\DeclareFontFamily{U} {MnSymbolC}{}
\DeclareFontShape{U}{MnSymbolC}{m}{n}{
  <-6> MnSymbolC5
  <6-7> MnSymbolC6
  <7-8> MnSymbolC7
  <8-9> MnSymbolC8
  <9-10> MnSymbolC9
  <10-12> MnSymbolC10
  <12-> MnSymbolC12}{}
\DeclareFontShape{U}{MnSymbolC}{b}{n}{
  <-6> MnSymbolC-Bold5
  <6-7> MnSymbolC-Bold6
  <7-8> MnSymbolC-Bold7
  <8-9> MnSymbolC-Bold8
  <9-10> MnSymbolC-Bold9
  <10-12> MnSymbolC-Bold10
  <12-> MnSymbolC-Bold12}{}
\DeclareSymbolFont{MnSyC} {U} {MnSymbolC}{m}{n}
\DeclareMathSymbol{\top}{\mathord}{MnSyA}{219} % smaller symbol for transpose
\DeclareMathSymbol{\bot}{\mathord}{MnSyA}{217}
\DeclareMathSymbol{\smallplus}{\mathord}{MnSyC}{20} % a smaller plus sign
\DeclareMathSymbol{\smallminus}{\mathord}{MnSyC}{16} % a smaller plus sign
\DeclareMathSymbol{\smalltimes}{\mathord}{MnSyC}{21}
\DeclareMathSymbol{\smallpm}{\mathord}{MnSyC}{22} % a smaller \pm
\DeclareMathSymbol{\smallmp}{\mathord}{MnSyC}{23}
\definecolor{MapleRed}{RGB}{120,0,14}
\begin{document}

\title{An isolated Lyapunov exponent}
\author{Jairo Bochi}
\address{Department of Mathematics, The Pennsylvania State University}
\email{\href{mailto:bochi@psu.edu}{bochi@psu.edu}}
\date{September 16, 2025}
\subjclass[2020]{37D25}

\begin{abstract}
We construct a continuous linear cocycle over an expanding base dynamics for which the Lyapunov exponents of all ergodic invariant probability measures are small, except for one measure whose Lyapunov exponents are away from zero. The support of this distinguished measure is not a periodic orbit and therefore our example violates the periodic approximation property. 
\end{abstract}

\maketitle

%%%%%%%%%%%%%%%%%%%%%%%%%%%%%%%%%%%%%%%%%%%%%%%%%%%%%%%%%%%%%%%%%%%%%%%%%%%%%%%%%%%%%%%%%
\section{Introduction}

\subsection{Lyapunov exponents}

A \emph{linear cocycle} is a pair $(T,A)$ where $T \colon X \to X$ is a map from a set $X$ to itself, and $A \colon X \to \mathrm{Mat}(d,\R)$ is a function taking values in the set of real $d \times d$ matrices. The \emph{cocycle products} are formed by multiplying the values of $A$ along orbits of $T$, that is,
\begin{equation}
A_T^{(n)}(x) \coloneqq A(T^{n-1}x) \cdots A(x), \quad \text{for all $x \in X$ and $n \ge 0$,}
\end{equation}
where the empty product $A^{(0)}(x)$ is defined as the identity matrix.

Now assume that $T$ is a measure-preserving transformation of a probability space $(X,\mathcal{S},\mu)$, 
that the function $A$ takes values on the group of invertible matrices $\mathrm{GL}(d,\R)$, 
and that the (nonnegative) function $\log \max\{\|A(x)\|,\|A(x)^{-1}\|\}$ is $\mu$-integrable. Then the \emph{Lyapunov exponents}
\begin{equation}
\lambda_i(T,A,x) \coloneqq \lim_{n \to \infty} \frac{1}{n} \log s_i \big( A_T^{(n)}(x) \big)
\end{equation}
exist for $\mu$-almost every $x \in X$ and every $i \in \{1,\dots,d\}$; here $s_i (\mathord{\cdot})$ denotes the $i^\text{th}$ largest singular value.  
The average values $\lambda_i(T,A,\mu) \coloneqq \int_X \lambda_i(T,A, x) \, d\mu(x)$ are also well defined. 
If the measure $\mu$ is ergodic, then $\lambda_i(T,A,x) = \lambda_i(T,A,\mu)$ for $\mu$-almost every $x$.

A case of particular interest is when the measure $\mu$ is supported on a periodic orbit, that is $\mu = \frac{1}{k} \sum_{j=0}^{k-1} \delta_{T^j p}$ where $p \in X$ and $k>0$ are such that $T^k p = p$. In this case, the Lyapunov exponents are $\lambda_i(T,A,\mu) = \frac{1}{k} \log |\beta_i|$, where $\beta_1 , \dots, \beta_d$ are the eigenvalues of the matrix product $A_T^k(p)$, repeated according to multiplicity and ordered according to their absolute values. 

\medskip

Henceforth,  $X$ will be a compact metric space, and the transformation~$T$ will be continuous. The set of all $T$-invariant Borel probability measures on $X$ will be denoted $\mathcal{M}_T$ and endowed with the weak topology. This is a convex compact set, and the set of its extreme points will be denoted by $\mathcal{M}_T^\mathrm{erg}$, since it is composed exactly by the ergodic measures. Also, let $\mathcal{M}_T^\mathrm{per}$ denote the subset of $\mathcal{M}_T^\mathrm{erg}$ formed by measures whose support is a periodic orbit.

Now, if $A \colon X \to \mathrm{GL}(d,\R)$ is a continuous function on the compact space $X$, then the Lyapunov exponents $\lambda_i(T,A,\mu)$ are well-defined for all $\mu \in \mathcal{M}_T$. Therefore, fixed a continuous linear cocycle $(T,A)$ over a compact base, we can consider the averaged Lyapunov exponents $\lambda_i(T,A,\mathord{\cdot})$ as functions on the space $\mathcal{M}_T$ of invariant measures. 
For $i=1$, Kingman's subadditive ergodic theorem implies that, for all $\mu \in \mathcal{M}_T$,
\begin{equation}
\lambda_1(T,A,\mu) = \inf_{n>0} \frac{1}{n} \int_X \log \|  A_T^{(n)}(x) \| \, d\mu(x) \, ;
\end{equation}
in particular, the function $\lambda_1(T,A,\mathord{\cdot})$ is upper semicontinuous.
A similar argument using exterior powers shows that $\sum_{j=1}^i\lambda_j(T,A,\mathord{\cdot})$ is upper semicontinuous, for each~$i$.

In general, the Lyapunov exponents are \emph{not} continuous functions of the measure. For example, let $T = \sigma$ be the one-sided full shift on two symbols $0$, $1$, and for each $x = (x_i)_{i \ge 0}$, let the matrix $A(x)$ be $\left( \begin{smallmatrix} 2 & 0 \\ 0 & 1/2 \end{smallmatrix} \right)$ if the zeroth symbol $x_0$ is $0$ and $\left( \begin{smallmatrix} 0 & -1 \\ 1 & 0 \end{smallmatrix} \right)$ otherwise. For each $k \ge 1$, let $\mu_k$ be the shift-invariant measure supported on the orbit of the periodic point $(0^{k-1} 1)^\infty$ (that is, the point $p = (p_i)_{i \ge 0}$ such that $p_i = 1$ iff $i \equiv 1 \bmod{k}$). Then the sequence $(\mu_k)$ converges weakly to $\nu \coloneqq \delta_{0^\infty}$. However, $\lambda_1(\sigma,A,\mu_k) = 0$ for every $k$ and $\lambda_1(\sigma,A,\nu) = \log 2$. So $\nu$ is a point of discontinuity of $\lambda_1(\sigma,A,\mathord{\cdot})$.

For locally constant cocycles over a shift map (like the example above), in arbitrary dimension, the Lyapunov exponents are continuous on the subset of fully-supported \emph{Bernoulli} measures: see \cite{AEV}. Under extra conditions, the Lyapunov exponents become analytic functions: see \cite{Peres,ADM}. We refer the reader to \cite{Viana20} for a detailed survey of continuity (or lack thereof) of Lyapunov exponents in various contexts, and to \cite{DDGK} for recent developments. 

\subsection{An isolated Lyapunov exponent}

The purpose of this paper is to construct examples of continuous linear cocycles $(T,A)$ for which the Lyapunov exponents display a rather extreme form of discontinuity as functions of the measure.
Our examples will take values in the group $\mathrm{SL}(2,\R)$.
In particular, it will be sufficient to consider the first Lyapunov exponent $\lambda_1$, since the second one will be $\lambda_2 = - \lambda_1$. 
Our main result is: 

\begin{theorem}\label{t.main}
Let $D$ be the doubling map on the circle $\T \coloneqq \R / \Z$. 
Fix numbers $c>\epsilon>0$.
There exists a continuous map $A \colon \T \to \mathrm{SL}(2,\R)$ with the following properties:
\begin{enumerate}
\item\label{i.main1}
there exists an ergodic measure $\nu$ such that $\lambda_1(D,A,\nu) = c$;

\item\label{i.main2} 
for all ergodic measures $\mu$ different from $\nu$, we have $\lambda_1(D,A,\mu) \le \epsilon$;

\item\label{i.main3}  
if $(\mu_k)$ is any sequence of ergodic measures different from $\nu$ and converging weakly to $\nu$, we have 
$\lim\limits_{k \to \infty} \lambda_1(D,A,\mu_k) = 0$.
\end{enumerate}
In fact, $\nu$ can be chosen as a Sturmian measure with arbitrary irrational parameter.
\end{theorem}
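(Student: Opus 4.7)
The plan is to concentrate a hyperbolic cocycle in a shrinking neighborhood of the Sturmian support $K := \operatorname{supp}(\nu) \subset \T$, where $K$ is the image (under binary coding) of the Sturmian subshift of irrational slope $\alpha$, and $\nu$ is its unique invariant measure. The structural observation that makes separation of $\nu$ from other measures conceivable is that $K$ is a Cantor set with no periodic orbits of $D$, on which $\nu$ is uniquely ergodic; hence every other ergodic measure $\mu$ satisfies $\mu(K) = 0$, so $\mu$-typical orbits must spend a positive proportion of time off $K$.

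For the construction, I would first exploit the canonical semiconjugacy $\phi \colon (K, D|_K) \to (\T, R_\alpha)$ between the doubling map restricted to $K$ and the irrational rotation. Pick a continuous cocycle $B \colon \T \to \mathrm{SL}(2,\R)$ over $R_\alpha$ whose Lyapunov exponent against Lebesgue measure equals $c$, and pull $B$ back by $\phi$ to define $A$ on $K$. Then extend $A$ continuously to all of $\T$ so that it coincides with the identity (or a small rotation) outside a nested sequence of neighborhoods $U_n \searrow K$, interpolating through the annuli $U_n \smallsetminus U_{n+1}$ in such a way that every excursion off $K$ contributes only sub-exponential factors to the cocycle products.

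Verification then splits as follows. Property~\eqref{i.main1} is immediate: $\nu$-typical orbits stay in $K$, where $A$ is by construction the pullback of $B$, giving $\lambda_1(D,A,\nu) = c$. Property~\eqref{i.main2} follows by Kingman's subadditive formula: any ergodic $\mu \ne \nu$ gives positive mass to the complement of any fixed neighborhood of $K$, where $A$ is trivial and thus contributes nothing to growth; scaling the construction appropriately keeps the average growth below $\epsilon$. Property~\eqref{i.main3} is the truly delicate one: even when $\mu_k$-orbits spend proportion tending to $1$ inside each $U_n$, one must arrange for the expanding direction of $B$ to be coherently propagated only for \emph{exact} $K$-orbits, so that any finite-time deviation accumulates angular mismatches which destroy exponential growth. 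This is where the specific combinatorics of Sturmian words (bounded complexity $n+1$, continued-fraction return-time hierarchy for $\alpha$) enter: $B$ and its extension should be designed so that only matrix sequences matching genuine Sturmian codings of slope $\alpha$ produce coherent expansion.

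The main obstacle will be the engineering of $A$ on $U_n \smallsetminus K$: it must be continuous, asymptotically trivial as $n \to \infty$, and simultaneously preclude non-$\nu$ measures from building up exponential growth even when their supports nearly coincide with $K$. A plausible route is to let $B$ depend non-trivially on the rotation coordinate in a way that encodes $\alpha$, and to design the interpolation so that each short excursion through an annulus $U_n \smallsetminus U_{n+1}$ rotates the would-be expanding frame by an amount incommensurable with the expansion generated on $K$. Balancing continuity of $A$ against the required drop from $c$ on $\nu$ all the way to $0$ in the limit along $\mu_k \to \nu$, $\mu_k \ne \nu$, is the principal technical challenge; everything else reduces to a careful application of the variational estimates made possible by Kingman's theorem and the unique ergodicity of $\nu$ on $K$.
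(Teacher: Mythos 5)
Your setup coincides with the paper's: the Sturmian Cantor set $K$, the semiconjugacy $h\colon (K,D|_K)\to(\T,R_\alpha)$, pulling back a rotation cocycle with exponent $c$, and the observation that every ergodic $\mu\neq\nu$ gives zero mass to $K$. But the heart of the matter --- how to extend $A$ off $K$ so that properties (\ref{i.main2}) and (\ref{i.main3}) hold --- is where your proposal has a genuine gap. Making $A$ ``trivial outside a nested sequence of neighborhoods $U_n\searrow K$'' does not control the Lyapunov exponent of measures $\mu$ with small but positive $\mu(I_0)$: such measures spend an arbitrarily large fraction of time near $K$, where continuity forces $A$ to be close to the hyperbolic pullback, and inserting occasional identity matrices into a long hyperbolic product does not destroy exponential growth (e.g.\ $n$ copies of $\mathrm{diag}(2,1/2)$ interspersed with identities still has norm $2^n$). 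So your argument for (\ref{i.main2}) --- ``$\mu$ gives positive mass to the region where $A$ is trivial, hence growth stays below $\epsilon$'' --- is not valid, and your mechanism for (\ref{i.main3}) (``angular mismatches destroy coherent expansion'') is an unsubstantiated hope rather than an estimate. A related omission: the cocycle you place on $K$ must be \emph{non-uniformly} hyperbolic (positive exponent but not uniformly hyperbolic, as in \cref{p.NUH}); an arbitrary $B$ with exponent $c$, e.g.\ a constant diagonal one, would be uniformly hyperbolic, and then openness of uniform hyperbolicity would force nearby measures to have exponents near $c$, contradicting (\ref{i.main3}).

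The paper's key idea, absent from your proposal, is \cref{t.ABD}: a non-uniformly hyperbolic cocycle over $R_\alpha$ is the endpoint of a continuous path $(A_t)_{t\in[0,1]}$ where each $A_t$, $t>0$, is conjugate to an orthogonal cocycle and hence \emph{product bounded}, with bound $e^{M(t)}\to\infty$ only as $t\to 0$. The extension is then $A(x)=B_{\psi(x)}(h(x))$, where $\psi$ is a continuous modulating function vanishing exactly on $K$ and (essentially) constant along each excursion $I_n\to I_{n-1}\to\cdots\to I_0$ through the gaps. Consequently an entire excursion of length $n+1$ is a product drawn from finitely many product-bounded cocycles, and the reparametrization \cref{l.slow_ABD} together with the choice of $\ell(n)=\lfloor(n+2)^{1/4}\rfloor$ makes the resulting bound $O(\epsilon\sqrt{n+m})$, i.e.\ sublinear in the excursion length (\cref{l.key}); summing over excursions via Cauchy--Schwarz yields $\lambda_1(D,A,\mu)\le\epsilon\sqrt{\mu(I_0)}$, which gives both (\ref{i.main2}) and (\ref{i.main3}). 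Without this (or some equally quantitative) device for killing growth along arbitrarily long near-$K$ excursions, the construction you outline cannot be completed.
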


This is illustrated in \cref{f.jump}. 
The definition of Sturmian measures is given in \cref{s.sturmian}; for the moment, we note that they are not supported on periodic orbits (at least when the parameter is irrational).

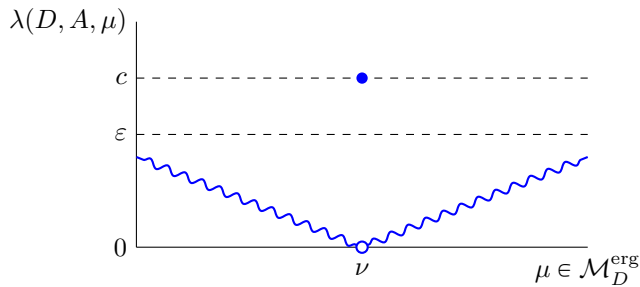
\begin{figure}
\begin{tikzpicture}[scale=3]
\draw (-1,0) -- (1,0) node[below]{$\mu \in \mathcal{M}^\mathrm{erg}_D$};
\draw (-1,0) node[left]{$0$} -- (-1,1) node[left]{$\lambda(D,A,\mu)$};  
\draw[dashed] (-1,.5) node[left]{$\epsilon$} -- (1,.5);
\draw[dashed] (-1,.75) node[left]{$c$} -- (1,.75);
\tikzset{decoration={snake, amplitude=0.5mm, segment length=2.5mm, post length=0mm, pre length=1mm}}
\draw[MapleRed, thick, decorate] (0,0) -- (1,.4);
\draw[MapleRed, thick, decorate] (0,0) -- (-1,.4);
\fill[thick, draw=MapleRed, fill=white] (0,0) circle (0.025);
\fill[thick, MapleRed] (0,.75) circle (0.025);
\draw (0,-.03) node[below] {$\nu$};
\end{tikzpicture}
\caption{The Lyapunov exponent function in \cref{t.main}.}\label{f.jump}
\end{figure}

In the next subsection, we describe an application of \cref{t.main}.

\subsection{Periodic approximation of Lyapunov exponents}

Subshifts of finite type have the property that 
every ergodic Borel probability measure can be approximated by a probability measure supported on a periodic orbit,
that is, $\mathcal{M}_T^\mathrm{per}$ is dense in $\mathcal{M}_T^\mathrm{erg}$.  
% \margin{If the SFT is transitive, then the approximation property also holds for non-ergodic measures.}
This periodic approximation property also holds for Axiom A diffeomorphisms \cite{Sigmund} and $C^1$-generic diffeomorphisms \cite[Theorem~4.2]{ABC}. 
Katok's Closing Lemma \cite[Section~3]{Katok} shows that every ergodic \emph{hyperbolic} measure of a $C^{1+\alpha}$ diffeomorphism can be approximated by measures supported on periodic orbits (see \cite{LLiuS} for further information).

Given a linear cocycle $(T,A)$ and an ergodic measure $\mu \in \mathcal{M}_T^\mathrm{erg}$, we are interested in the problem of approximating  $\mu$ by elements of $\mathcal{M}_T^\mathrm{per}$ and simultaneously approximating the Lyapunov exponents of the cocycle. 
The following result, due to Kalinin \cite{Kalinin}, provides sufficient conditions for this problem to have a solution:

\begin{theorem}[Kalinin's periodic approximation theorem] \label{t.PAT}
Let $T \colon X \to X$ be either a subshift of finite type or the restriction of a diffeomorphism to a hyperbolic set with local product structure. 
Suppose that $A \colon X \to \mathrm{GL}(d,\R)$ is H\"older-continuous.
Then, for every $\mu \in \mathcal{M}_T^\mathrm{erg}$, there exists a sequence $(\mu_k)$ in $\mathcal{M}_T^\mathrm{per}$ such that $\mu_k \to \mu$ weakly and $\lambda_i(T,A,\mu_k) \to \lambda_i(T,A,\mu)$ as $k \to \infty$.
\end{theorem}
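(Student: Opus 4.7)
The plan is to combine Oseledets's multiplicative ergodic theorem with Anosov-style shadowing and the H\"older hypothesis on $A$. Fix an ergodic $\mu$ with Lyapunov spectrum $\lambda_1(\mu) \geq \cdots \geq \lambda_d(\mu)$. By Oseledets plus Lusin, for every $\eta > 0$ one obtains a compact Pesin-type block $K_\eta$ with $\mu(K_\eta) > 1 - \eta$ on which the Oseledets splitting $E_1 \oplus \cdots \oplus E_s$ and the associated Lyapunov norms depend continuously on the base point. I would pick $x \in K_\eta$ at which Birkhoff averages of continuous functions converge to $\mu$-integrals, and use Poincar\'e recurrence inside $K_\eta$ to extract return times $n_k \to \infty$ with $T^{n_k} x \to x$ through $K_\eta$.

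Next, I would invoke the Anosov closing lemma, available under the hypotheses thanks to the local product structure, to obtain periodic points $p_k$ with $T^{n_k} p_k = p_k$ and exponential shadowing
\begin{equation*}
d(T^j p_k, T^j x) \le C \delta_k \min(\theta^j, \theta^{n_k - j}), \qquad 0 \le j \le n_k,
\end{equation*}
for some $\theta \in (0,1)$ and $\delta_k \to 0$. Define $\mu_k \in \mathcal{M}_T^{\mathrm{per}}$ to be the uniform measure on the orbit of $p_k$. Then $\mu_k \to \mu$ weakly: for any continuous $\varphi$, the difference between $\int \varphi \, d\mu_k$ and $\frac{1}{n_k} \sum_{j=0}^{n_k - 1} \varphi(T^j x)$ is controlled by the uniform continuity of $\varphi$ together with the shadowing estimate, and the latter sum converges to $\int \varphi \, d\mu$ by the choice of $x$.

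To match the Lyapunov exponents, I would pass to the exterior-power cocycles $\Lambda^i A$ and reduce the task to showing $\lambda_1(T, \Lambda^i A, \mu_k) \to \lambda_1(T, \Lambda^i A, \mu)$ for each $i$; this yields individual convergence of the $\lambda_i$ by telescoping the partial sums. The upper bound is immediate from the upper semicontinuity of $\sum_{j=1}^i \lambda_j(T,A,\cdot)$ recorded in the introduction. For the reverse inequality, I would use the telescoping identity
\begin{equation*}
A_T^{(n)}(p_k) - A_T^{(n)}(x) = \sum_{j=0}^{n-1} A_T^{(n-j-1)}(T^{j+1} p_k) \bigl[A(T^j p_k) - A(T^j x)\bigr] A_T^{(j)}(x)
\end{equation*}
together with the H\"older bound $\|A(T^j p_k) - A(T^j x)\| \le \mathrm{const} \cdot d(T^j p_k, T^j x)^\alpha$. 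In Lyapunov coordinates adapted to $K_\eta$, the off-diagonal contribution is dominated by the geometric sum $\sum_j \theta^{\alpha \min(j, n_k - j)}$, bounded uniformly in $k$. Consequently $A_T^{(n_k)}(p_k)$ nearly preserves the Oseledets flag at $x$, and its action on the $i$-th piece scales by $e^{n_k \lambda_i(\mu) + o(n_k)}$.

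The main obstacle is converting this approximate block structure into genuine spectral information on $A_T^{(n_k)}(p_k)$, since the periodic-orbit exponents are read off eigenvalues rather than singular values and the two quantities can differ substantially for a single matrix. When the $\lambda_i(\mu)$ are pairwise distinct, a Gershgorin-type argument in Lyapunov coordinates shows each $|\beta_i|$ is comparable to the corresponding diagonal-block norm, giving $\frac{1}{n_k} \log |\beta_i| \to \lambda_i(\mu)$. Coinciding exponents are handled by working throughout with the partial sums $\lambda_1 + \cdots + \lambda_i$, whose upper semicontinuity already provides one direction and whose matching lower bound survives the same argument. Sending $\eta \to 0$ and extracting a diagonal subsequence then produces the desired $\mu_k \in \mathcal{M}_T^{\mathrm{per}}$ with $\mu_k \to \mu$ weakly and $\lambda_i(T,A,\mu_k) \to \lambda_i(T,A,\mu)$.
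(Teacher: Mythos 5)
The paper does not prove this theorem; it is imported verbatim (with a minor strengthening in the conclusion) from Kalinin \cite{Kalinin}, preceded by Wang--Sun \cite{WangSun} and Dai \cite{Dai_Forum}. So there is no ``paper's own proof'' to compare against. What I can say is that your outline is the standard one, close in spirit to Kalinin's: pick a Lyapunov-regular point whose orbit spends most of its time in a Pesin block, close the orbit using the Anosov closing lemma with exponential shadowing, and exploit the H\"older regularity of $A$ together with Lyapunov norms to compare the cocycle along the shadowing periodic orbit with the cocycle along the original orbit.

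That said, as written the sketch has real gaps at the technical heart of the argument. The telescoping identity together with the H\"older bound does not, on its own, give anything useful: the factors $A_T^{(n-j-1)}(T^{j+1}p_k)$ and $A_T^{(j)}(x)$ grow like $e^{j\lambda_1}$ and $e^{(n-j)\lambda_1}$, so the summands are of order $e^{n\lambda_1}\theta^{\alpha\min(j,n-j)}$ and the whole sum is only bounded by a constant times $e^{n\lambda_1}$ --- the same size as the matrices themselves. The whole point of Kalinin's argument is to carry out the comparison \emph{inside} the Lyapunov coordinates along the orbit of $x$, where the conjugated cocycle is $\epsilon$-close to block-conformal and where the H\"older perturbation coming from shadowing can actually be absorbed; you invoke ``Lyapunov coordinates adapted to $K_\eta$'' but don't explain how to define them along the orbit of $p_k$, which lies off the orbit of $x$ and generically off the Pesin block, nor how the slowly-varying (tempered) but unbounded coordinate change interacts with the exponential shadowing estimate. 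The step ``a Gershgorin-type argument shows each $|\beta_i|$ is comparable to the diagonal-block norm'' is likewise a placeholder for the actual work: one must show that the return map, in the perturbed coordinates, preserves a family of cones around the Oseledets subspaces with controlled distortion, and it is precisely here that the H\"older exponent must beat the hyperbolicity rate in the Pesin estimates. So the plan is right, but the passages you flag as ``the main obstacle'' are exactly the parts that remain to be proved.
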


We observe that the original statement in \cite[Theorem~1.4]{Kalinin} presents a slightly more general hypothesis on the dynamics. Moreover, it does not explicitly include the conclusion that $\mu_k \to \mu$, although this property follows from the proof. \cref{t.PAT}  was preceded by a more restrictive result of Wang and Sun \cite[Theorem~3.1]{WangSun} and was independently established by Dai \cite{Dai_Forum}. 

The Periodic Approximation Theorem~\ref{t.PAT} has several applications. 
It is a key ingredient in Kalinin's proof of the Livsic theorem for matrix cocycles \cite[Theorem~1.1]{Kalinin}. 
Using the connection between the joint spectral radius and Lyapunov exponents (see \cite[Theorem~3.1]{DHX}, \cite[Theorem~2.1]{Morris13}), the well-known theorem of Berger and Wang \cite{BergerWang} becomes an immediate corollary of \cref{t.PAT}.

Improvements of \cref{t.PAT} in various directions can be found in the articles \cite{Backes17,Backes18,Dai_Nonlin,KalSad_DCDS,LLiaoS,ZC}. Related results of perturbative nature include \cite[Theorem~3.18]{ABC} and \cite[Theorem~5.1]{BoBo}. The problem of extending the Periodic Approximation Theorem~\ref{t.PAT} to infinite-dimensional cocycles is subtle, and there are positive and negative results: see \cite{BackesDrag, Hurtado, KalSad_ETDS}. For a survey on periodic approximation of Lyapunov exponents, see \cite[Section~13.5]{Sad24}.

In spite of this activity, the necessity of the hypothesis of H\"older continuity in \cref{t.PAT} has not been investigated before.
By lifting the cocycle constructed in \cref{t.main} to the natural extension of the doubling map (i.e., the Smale--Williams solenoid), 
we conclude that the Periodic Approximation Theorem~\ref{t.PAT} does not hold for continuous cocycles. 
This argument also shows that the cocycle $A$ in \cref{t.main} cannot be H\"older-continuous.

For another instance of exotic behavior of the Lyapunov exponents of continuous cocycles and further discussion of the role of regularity, see \cite{Bochi_mono}.

\subsection{A few words about the construction}

The example in \cref{t.main} is constructed as follows. Let $\nu$ be a Sturmian measure with irrational parameter, and let $K$ be its support. Then the restriction of the doubling map to $K$ is semiconjugate to an irrational rotation. Using this fact, we start by defining the cocycle $A$ on $K$ so that it is nonuniformly hyperbolic there. Then we need to extend the cocycle to the complement of $K$ so that the Lyapunov exponents are small. The dynamics on the complement of $K$ has a convenient structure which we exploit. The key ingredient in our proof is a theorem from \cite{ABD12}, which among other things says that every nonuniformly hyperbolic $\mathrm{SL}(2,\R)$-cocycle over an irrational rotation can be ``accessed'' by a continuous path formed by cocycles with zero Lyapunov exponent. Our construction consists of ``inserting'' this path into the gaps of the Cantor set~$K$. However, to ensure continuity of the resulting cocycle, we need to ``modulate'' the small gaps. This has an effect on the Lyapunov exponents, which needs to be kept small.

%%%%%%%%%%%%%%%%%%%%%%%%%%%%%%%%%%%%%%%%%%%%%%%%%%%%%%%%%%%%%%%%%%%%%%%%%%%%%%%%%%%%%%%%%
\section{Preliminaries on linear cocycles}

Let $(T,A)$ be a continuous linear cocycle. %, specified by continuous maps $T \colon X \to X$ and $A \colon X \to \mathrm{SL}(d,\R)$ on a compact metric space $X$. 
Suppose $A$ takes values in the group $\mathrm{SL}^{\smallpm}(d,\R)$ of matrices with determinant $\pm 1$.
We say that the cocycle $(T,A)$ is \emph{product bounded} if  
\begin{equation}
\sup_{x \in X}  \sup_{n \ge 0} \| A_T^{(n)}(x) \| < \infty \, .
\end{equation}
This condition is satisfied if the cocycle $(T,A)$ is \emph{conjugate} to an orthogonal cocycle $(T,O)$, that is, 
\begin{equation}
A(x) = C(Tx) \, O(x) \, C(x)^{-1}
\end{equation} 
for some continuous maps $C \colon X \to \mathrm{GL}(d,\R)$ and $O \colon X \to \mathrm{O}(d)$.
The converse holds if the base dynamics $T$ is minimal: see \cite[Theorem~A]{CNP}.

We say that a continuous $\mathrm{SL}^{\smallpm}(2,\R)$-cocycle $(T,A)$ is \emph{uniformly hyperbolic} if 
\begin{equation}\label{e.UH}
	\liminf_{n \to \infty} \frac{1}{n} \inf_{x \in X} \log  \|  A_T^{(n)}(x) \|  > 0 \, .
\end{equation}
For further discussion and equivalent formulations, see \cite[{\S}3.8.1]{DamanikFv1} and \cite[{\S}2.2]{Viana14}.

\medskip

The Haar measure on the circle $\T = \R/\Z$ is denoted by $\mathrm{Leb}_{\T}$ or $\mathrm{Leb}$.
For each $\alpha \in \R$, let $R_\alpha \colon \T \to \T$ be the translation $x \mapsto x + \alpha \bmod{1}$. 

The starting point of our construction is the following:

\begin{proposition}\label{p.NUH}
Let $\alpha \in \R \setminus \Q$ and $c>0$.
There exists a continuous function $A \colon \mathbb{T} \to \mathrm{SL}(2,\R)$ such that the cocycle $(R_\alpha,A)$ is not uniformly hyperbolic and has Lyapunov exponent $\lambda_1(R_\alpha,A,\mathrm{Leb}) = c$.
\end{proposition}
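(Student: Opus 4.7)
The plan is to realize $A$ as a Herman-style cocycle: for a parameter $\mu > 1$, set
\[
A_\mu(x) \coloneqq R_{2\pi x} \cdot \mathrm{diag}(\mu,\mu^{-1}),
\]
where $R_\theta$ denotes the rotation by angle $\theta$. This is well-defined and continuous as a map $\T \to \mathrm{SL}(2,\R)$ since $R_{2\pi} = R_0 = I$. The parameter $\mu$ will be chosen at the end to tune the Lyapunov exponent.

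For non-uniform hyperbolicity, I would use a cohomological obstruction. If $(R_\alpha, A_\mu)$ were uniformly hyperbolic, there would be a continuous invariant line field $L \colon \T \to \mathbb{RP}^1$. Parameterizing $L(x)$ by an angle $\theta(x) \in \R/\pi\Z$, lifting to $\tilde\theta \colon \R \to \R$ (whose degree is forced to be $2$ by the winding of $R_{2\pi x}$), and writing $\tilde\theta(x) = 2\pi x + u(x)$ with $u$ continuous and $1$-periodic, the cocycle equation $A_\mu(x) L(x) = L(x+\alpha)$ takes the form
\[
u(x+\alpha) - u(x) \;=\; h\bigl(2\pi x + u(x)\bigr) + 2\pi x + \mathrm{const},
\]
where $h$ is the bounded, $\pi$-periodic function encoding the projective action of $\mathrm{diag}(\mu,\mu^{-1})$. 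The left-hand side is $1$-periodic and bounded in $x$, while the right-hand side contains the unbounded affine term $2\pi x$; contradiction. So $A_\mu$ is not UH for any $\mu \geq 1$.

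For the Lyapunov exponent, Herman's subharmonicity trick gives the lower bound $\lambda_1(R_\alpha, A_\mu, \mathrm{Leb}) \geq \log\cosh(\log\mu)$: the function $z \mapsto \tfrac{1}{n}\log\|A_\mu^{(n)}(z)\|$ is subharmonic on $\C$, so its $x$-average $f_n(y) \coloneqq \int_\T \tfrac{1}{n}\log\|A_\mu^{(n)}(x+iy)\|\,dx$ is convex in $y \in \R$. A direct computation using the dominant Fourier modes of $R_{2\pi(x+iy)}$ produces an asymptotic rank-one matrix whose nonzero eigenvalue is $2\cosh(\log\mu)$, which yields $f_n(y) = 2\pi|y| + \log\cosh(\log\mu) + o(1)$ uniformly in $n$ as $|y| \to \infty$, and convexity then forces $f_n(0) \geq \log\cosh(\log\mu)$. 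The matching upper bound comes from Avila's global theory for analytic $\mathrm{SL}(2,\R)$-cocycles: the complexified Lyapunov exponent $\epsilon \mapsto \lambda_1(A_{\mu,\epsilon})$ is convex and piecewise affine with integer acceleration, which forces equality here. So $\lambda_1(R_\alpha, A_\mu, \mathrm{Leb}) = \log\cosh(\log\mu)$, and choosing $\mu \coloneqq e^c + \sqrt{e^{2c}-1}$ produces the prescribed value $c$.

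The main obstacle is this upper bound on the Lyapunov exponent, which is what invokes nontrivial machinery (Avila's global theory). A parallel route replaces the Herman cocycle with an almost Mathieu cocycle at energy in the spectrum of the associated Schr\"odinger operator, supplying non-UH via Johnson's theorem and the Lyapunov-exponent upper bound via the Bourgain--Jitomirskaya theorem; either way, pinning $\lambda_1$ exactly to $c$ seems to require substantial input, and I do not expect a purely elementary construction.
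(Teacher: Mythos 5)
Your construction is the same as the paper's: Herman's cocycle $\mathrm{diag}(\gamma,\gamma^{-1})\,U(x)$ with $U(x)$ the rotation by angle $2\pi x$ (the order of the two factors is immaterial for both conclusions), with $\gamma$ tuned so that $\log\frac{\gamma+\gamma^{-1}}{2}=c$. Your non-uniform-hyperbolicity argument --- no continuous invariant line field can absorb the winding of the rotation factor --- is precisely the topological argument of \cite[Proposition~4.2]{Herman83}, which the paper cites but replaces by a two-line induction: $A^{(2n)}_{R_\alpha}(R_\alpha^{-n}(\tfrac12)) = (-1)^n U(-n\alpha)$, so the infimum in \eqref{e.UH} does not grow along even times. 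Your version is legitimate provided you invoke the standard equivalence between \eqref{e.UH} and the existence of continuous invariant directions; also, the parenthetical claim that the degree of $\tilde\theta$ is ``forced to be $2$'' is neither needed nor justified a priori --- your bounded-versus-unbounded contradiction works for any putative degree, which is the actual content of the argument.

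The one genuine gap is in the upper bound for the Lyapunov exponent. Convexity and evenness of $y\mapsto L(y)$, quantization of the acceleration, and the asymptotics $L(y)=2\pi|y|+\log\cosh(\log\gamma)$ for large $|y|$ do \emph{not} by themselves force $L(0)=\log\cosh(\log\gamma)$: they leave open the possibility that the slope is $0$ on some interval $(0,y_1)$ and jumps to $2\pi$ only at $y_1>0$, in which case $L(0)=\log\cosh(\log\gamma)+2\pi y_1$ is strictly larger. To exclude this you need an additional input --- for instance Avila's theorem that a real analytic cocycle with $L>0$ and zero acceleration is uniformly hyperbolic, which together with the non-uniform hyperbolicity you already established rules out the bad case --- or, far more economically, the exact identity $\lambda_1(R_\alpha,A,\mathrm{Leb})=\log\frac{\gamma+\gamma^{-1}}{2}$ of \cite[Example~12]{AvilaBochi}, which is exactly what the paper cites and which bypasses the global theory entirely.
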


The statement above is well-known and can be proved in several ways (see e.g.\ \cite[Chapter~9]{DamanikFv2}).
For the convenience of the reader, we exhibit a specific example of cocycle with the required properties, following Herman \cite{Herman81}.

\begin{proof}
Let $\gamma>1$ be a parameter.
Define $A \colon \T \to \mathrm{SL}(2,\R)$ by 
\begin{equation}\label{e.Herman_def} 
A(x) \coloneqq 
\begin{pmatrix} 
	\gamma & 0 \\ 
	0 & \gamma^{-1} 
\end{pmatrix}   
U(x)
\, , \quad \text{where }
U(x)\coloneqq 
\begin{pmatrix} 
	\cos 2\pi x & -\sin 2\pi x \\ 
	\sin 2\pi x & \phantom{-}\cos 2\pi x  
\end{pmatrix} \, .
\end{equation}
Herman \cite{Herman81} proved that the cocycle $(R_\alpha,A)$ has a positive Lyapunov exponent; actually, this exponent is given by the following formula (see \cite[Example~12]{AvilaBochi}):
\begin{equation}
	\lambda_1(R_\alpha,A,\mathrm{Leb}) = \log \frac{\gamma+\gamma^{-1}}{2} \, .
\end{equation}
In particular, the value of $\gamma>1$ can be adjusted so that $\lambda_1(R,A,\mathrm{Leb}) = c$.

Herman's cocycle $(R_\alpha, A)$ is not uniformly hyperbolic: this fact follows from a topological argument \cite[Proposition~4.2]{Herman83}. 
Let us present here a more direct proof.
We claim that, for all $n \ge 0$,
\begin{equation}
A_{R_\alpha}^{(2n)} (R_\alpha^{-n}(\tfrac{1}{2})) = (-1)^n U(-n\alpha) \, ;
\end{equation}
in particular, property \eqref{e.UH} fails.
The verification, by induction, is left to the reader. 
\end{proof}

A cocycle $(R_\alpha,A)$ satisfying the conclusions of \cref{p.NUH} is called \emph{nonuniformly hyperbolic}, because it has nonzero Lyapunov exponents and is not uniformly hyperbolic. 
According to \cite[Theorem~C]{Bochi02}, there exists a $C^0$-perturbation $\tilde{A}$ of $A$ such that $\lambda_1(R_\alpha,\tilde{A},\mathrm{Leb}) = 0$. 
(See \cite[{\S}9.2]{Viana14} for a variation of the proof).
On the other hand, \cite[Theorem~1]{ABD09} provides a perturbation $\tilde A$ which is conjugate to an orthogonal cocycle.
(A simplification of the proof was found in \cite{BNavas}).
The following result, which is \cite[Theorem~4]{ABD12} specialized to irrational rotations, provides even stronger conclusions: the perturbations above can be found in a continuous path converging to the original cocycle. 

\begin{theorem}\label{t.ABD}
Let $\alpha \in \R \setminus \Q$ and let $A \colon \mathbb{T} \to \mathrm{SL}(2,\R)$ be continuous. 
If the cocycle $(R_\alpha,A)$ is not uniformly hyperbolic, then there exist continuous maps 
\begin{align}
(x,t) \in \T \times [0,1] &\mapsto A_t(x) \in \mathrm{SL}(2,\R) \, , \\
(x,t) \in \T \times (0,1] &\mapsto C_t(x) \in \mathrm{SL}(2,\R) \, , \text{ and}\\
(x,t) \in \T \times (0,1] &\mapsto O_t(x) \in \mathrm{SO}(2) 
\end{align}
such that 
\begin{alignat}{2}
	A_0(x) &= A(x)  &\quad &\text{for all $x\in \T$ and}\\
	A_t(x) &= C_t(R_\alpha(x)) O_t(x) C_t(x)^{-1} &\quad &\text{for all $(x,t) \in \T \times (0,1]$.}
\end{alignat}
\end{theorem}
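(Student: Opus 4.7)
The statement is \cite[Theorem~4]{ABD12} specialized to the base map $R_\alpha$, which is a minimal homeomorphism of the compact space $\T$ and hence falls within the scope of the original theorem. My plan is to outline how the construction in \cite{ABD12} specializes here.

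First, I would reduce the problem to producing the path $A_t$ alone. Once a continuous family $(A_t)_{t\in[0,1]}$ with $A_0=A$ and with $(R_\alpha,A_t)$ product bounded for every $t>0$ is in hand, the conjugating pair $(C_t,O_t)$ can be extracted at each fixed $t>0$: since $R_\alpha$ is minimal, the theorem of \cite{CNP} cited above yields continuous $C_t\colon \T\to\mathrm{GL}(2,\R)$ (which one can normalize into $\mathrm{SL}(2,\R)$) and $O_t\colon\T\to\mathrm{SO}(2)$ with $A_t(x)=C_t(R_\alpha x)\,O_t(x)\,C_t(x)^{-1}$. The joint continuity on $\T\times(0,1]$ then follows by observing that $C_t$ arises from an $A_t$-invariant section of the symmetric-space bundle $\mathrm{SL}(2,\R)/\mathrm{SO}(2)\times\T$, that this section is unique up to a finite ambiguity when the cocycle is product bounded, and that it depends continuously on the input cocycle.

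Second, for the construction of $A_t$ itself, I would adapt the perturbation scheme of \cite{ABD09}, simplified in \cite{BNavas}, so that it is parametrized continuously. Non-uniform hyperbolicity of $(R_\alpha,A)$ produces a base point $x_0\in\T$ together with arbitrarily long orbit arcs $\{x_0,R_\alpha x_0,\dots,R_\alpha^{n}x_0\}$ along which the product $A_{R_\alpha}^{(n)}$ sends one almost-unit direction close to a second almost-unit direction while contracting the transverse one (a quantitative \emph{near-tangency}, exactly of the type witnessed by the explicit sequence $R_\alpha^{-n}(\tfrac12)$ in the proof of \cref{p.NUH}). One modifies $A$ on a small arc around $x_0$ by right-multiplication with a family of $\mathrm{SO}(2)$-valued bumps $U_t$, with $U_0=\mathrm{Id}$, chosen so that for $t>0$ every near-tangency is destroyed by a rotation that sends the second direction back to the first; the perturbed orbit segments then close up into bounded loops, yielding product boundedness of $(R_\alpha,A_t)$. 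The dependence $t\mapsto A_t$ is continuous by construction.

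The main obstacle, and the technical core of \cite[Theorem~4]{ABD12}, is to calibrate the perturbation so that all three requirements hold simultaneously: the path is continuous down to $t=0$ with $A_0=A$, product boundedness of $A_t$ is uniform in $x\in\T$ for each fixed $t>0$, and the invariant sections used to define $C_t$ depend continuously on $t\in(0,1]$. Because $(R_\alpha,A)$ itself is non-uniformly hyperbolic, the conjugating maps $C_t$ must inevitably degenerate as $t\downarrow 0$; this is the reason the statement places $(0,1]$ rather than $[0,1]$ as the domain of $C_t$ and $O_t$, and it is what makes calibrating the scales of the bumps $U_t$ the delicate step of the argument.
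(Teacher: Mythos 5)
The paper itself does not prove \cref{t.ABD}: it is stated as a citation of \cite[Theorem~4]{ABD12}, specialized to the base $R_\alpha$, with no argument given. Your attempt correctly identifies the reference and correctly observes that $R_\alpha$ falls within its hypotheses, so as a ``proof'' in the sense of what the paper actually does (cite and use), your first sentence is already the whole content.

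The further sketch you offer, however, contains a step that would not survive scrutiny. You propose to first build the path $t\mapsto A_t$ of product-bounded cocycles, and then, at each fixed $t>0$, invoke \cite[Theorem~A]{CNP} to extract $C_t$ and $O_t$; joint continuity in $(x,t)$ is then claimed to follow because the invariant section of the bundle $\T \times \mathrm{SL}(2,\R)/\mathrm{SO}(2)$ is ``unique up to a finite ambiguity'' for a product-bounded cocycle. That uniqueness claim is false in general: take $A_t\equiv \mathrm{Id}$ (or any constant elliptic matrix). By minimality of $R_\alpha$, an invariant metric must be a constant positive-definite matrix, but \emph{any} such matrix works, so the set of invariant sections is a copy of the hyperbolic plane $\mathrm{SL}(2,\R)/\mathrm{SO}(2)$ rather than a finite set. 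Thus CNP only gives a conjugacy for each fixed $t$, and there is no canonical choice that is automatically continuous in $t$; one has to make a coherent selection. This is precisely why \cite{ABD12} constructs $C_t$ \emph{together} with $A_t$, tracking the conjugacy explicitly through the inductive perturbation scheme rather than extracting it afterwards. (Incidentally, \cite{CNP} postdates \cite{ABD12}, so it is not the tool used there.) Your description of the perturbation mechanism itself --- small $\mathrm{SO}(2)$-valued bumps closing long near-tangent orbit segments into bounded loops, in the spirit of \cite{ABD09,BNavas} --- is accurate as a heuristic, but the reduction in your first paragraph is where the argument, as written, has a gap.
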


Thus, for each $t \in (0,1]$, the cocycle $(R_\alpha,A_t)$ is product bounded, with a corresponding bound that depends continuously on $t$ and tends to infinity as $t \to 0$.
By reparametrization, this convergence can be made arbitrarily slow.
More precisely, we have the following statement:

\begin{lemma}\label{l.slow_ABD}
Let $\alpha \in \R \setminus \Q$ and $c>0$.
Let $M \colon (0,1] \to [0,\infty)$ be any continuous strictly decreasing function such that $M(t) \to \infty$ as $t \to 0$.
Then there exists a continuous map
\begin{equation}
(x,t) \in \T \times [0,1] \mapsto B_t(x) \in \mathrm{SL}(2,\R)
\end{equation}
such that 
\begin{equation}\label{e.prescribed_LE}
\lambda_1(R_\alpha,B_0,\mathrm{Leb}) = c
\end{equation}
and, for all $t \in (0,1]$, $x \in \T$, and $n \ge 0$,
\begin{equation}\label{e.B_and_M}
\| B_t^{(n)}(x) \| \le e^{M(t)} 
\end{equation}
where $B_t^{(n)} = (B_t)^{(n)}_{R_\alpha}$. 
\end{lemma}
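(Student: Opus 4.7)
The plan is to combine \cref{p.NUH} with \cref{t.ABD} via a slow time reparametrization. First I apply \cref{p.NUH} to obtain a continuous $A \colon \T \to \mathrm{SL}(2,\R)$ with $\lambda_1(R_\alpha, A, \mathrm{Leb}) = c$ for which $(R_\alpha, A)$ is not uniformly hyperbolic. Then \cref{t.ABD} applied to this $A$ furnishes continuous families $A_s$, $C_s$, $O_s$ with $A_0 = A$ and, for $s \in (0,1]$, $A_s(x) = C_s(R_\alpha x)\, O_s(x)\, C_s(x)^{-1}$.

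For $s \in (0,1]$, let $L(s) := \|C_s\|_\infty \, \|C_s^{-1}\|_\infty$. Since every $O_s^{(n)}(x)$ is orthogonal, the telescoping formula $A_s^{(n)}(x) = C_s(R_\alpha^n x)\, O_s^{(n)}(x)\, C_s(x)^{-1}$ gives $\|A_s^{(n)}(x)\| \le L(s)$ for all $n \ge 0$ and $x \in \T$, and $L$ is continuous on $(0,1]$ by the joint continuity of $C_s$. Crucially, $L$ is unbounded as $s \to 0$: otherwise, letting $s \to 0$ in $\|A_s^{(n)}(x)\| \le \sup L$ for each fixed $n$ and $x$ (using that $s \mapsto A_s^{(n)}(x)$ extends continuously to $s = 0$) would force $(R_\alpha, A)$ to be product bounded, contradicting $\lambda_1(R_\alpha, A, \mathrm{Leb}) = c > 0$. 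The core idea is then to set $B_t := A_{\phi(t)}$ for a continuous reparametrization $\phi \colon [0,1] \to [0,1]$ with $\phi(0) = 0$ chosen so that $L(\phi(t)) \le e^{M(t)}$. To construct $\phi$, I replace $L$ by a continuous, strictly decreasing majorant $\hat L \colon (0,1] \to [1,\infty)$ tending to $\infty$ at $0$---for instance, $\hat L(s) := \sup_{u \in [s,1]} L(u) + 1/s$---and set $\phi(t) := \hat L^{-1}(e^{M(t)})$. The hypothesis $M(t) \to \infty$ as $t \to 0$ then yields $\phi(t) \to 0$, so that $\phi$ extends continuously to $[0,1]$ with $\phi(0) = 0$.

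The main obstacle, and the one genuine subtlety, lies at the right endpoint $t = 1$, where $e^{M(1)}$ may fall below the value $\hat L(1)$ and render the inverse formula undefined. I resolve this by extending the ABD family $A_s$ past $s = 1$ to a slightly longer parameter interval, homotoping $C_s$ within its fixed homotopy class of continuous $\mathrm{SL}(2,\R)$-valued maps to a rotation cocycle of the same degree; at the new endpoint $L$ attains $1$, so the extended $\hat L$ becomes a homeomorphism of the extended parameter interval onto the full half-line $[1,\infty)$ and $\phi(t)$ is well defined for every $t \in (0,1]$. With this in place, $B_t := A_{\phi(t)}$ is jointly continuous in $(x,t)$, satisfies $B_0 = A$ (giving \eqref{e.prescribed_LE}), and obeys $\|B_t^{(n)}(x)\| \le L(\phi(t)) \le \hat L(\phi(t)) = e^{M(t)}$, which is \eqref{e.B_and_M}.
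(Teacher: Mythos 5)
Your proposal is correct and follows the same skeleton as the paper's proof: take $A$ from \cref{p.NUH}, feed it to \cref{t.ABD}, observe that the conjugacy $A_s = C_s(R_\alpha\,\cdot)\,O_s\,C_s^{-1}$ gives a product bound depending continuously on $s$, and then reparametrize time so that this bound is dominated by $e^{M(t)}$. The bounds, the continuity of $L$, the continuity of $\phi$ at $0$ (via $M(t)\to\infty$), and the joint continuity of $B_t(x)$ are all handled correctly; the observation that $L$ is unbounded near $0$ is true but not actually needed once you add the $1/s$ term to $\hat L$. The one place where you genuinely diverge from the paper is the endpoint issue at $t=1$, which you correctly identify as the real subtlety (if $M(1)=0$ then $B_1$ is forced to be $\mathrm{SO}(2)$-valued). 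The paper resolves it by conjugating the \emph{entire} family by the single map $C_1$, i.e., setting $\hat A_t(x) = C_1(R_\alpha x)^{-1}A_t(x)C_1(x)$: this leaves the Lyapunov exponent at $t=0$ unchanged, makes $\hat A_1 = O_1$ orthogonal so the product bound at $t=1$ is exactly $1$, and yields a clean decreasing bound $\hat M(t)$ with $\hat M(1)=0$, after which the reparametrization is immediate. Your resolution---extending the path past $s=1$ by homotoping $C_s$ through $\mathrm{SL}(2,\R)$-valued maps to an $\mathrm{SO}(2)$-valued one (possible since $\mathrm{SL}(2,\R)$ deformation retracts onto $\mathrm{SO}(2)$), keeping $O_s=O_1$ fixed---also works and reaches the same orthogonal endpoint, at the cost of an extra homotopy argument and a small bookkeeping adjustment to $\hat L$ so that it actually attains the value $1$ at the new endpoint (your formula $\sup_{u\in[s,1]}L(u)+1/s$ does not, as written, extend to equal $1$ there). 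So: same method, with a slightly heavier but valid fix for the endpoint normalization where the paper uses a one-line conjugation.
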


\begin{proof}
Given $\alpha$ and $c$, let $A$ be given by \cref{p.NUH}.
We apply \cref{t.ABD} obtaining the functions $A_t(x)$, $C_t(x)$, and $O_t(x)$.
For each $(x,t) \in \T \times [0,1]$, let
\begin{equation}
\hat{A}_t(x) \coloneqq C_0(R_\alpha(x))^{-1} A_t(x) C_0(x) \, ,
\end{equation}
which is a continuous function to $\mathrm{SL}(2,\R)$.
The cocycle $(R_\alpha,\hat{A}_0)$ is conjugate to $(R_\alpha,A_0)$ and so it has Lyapunov exponent $c$.
On the other hand, $\hat{A}_1 = O_1$ takes values in $\mathrm{SO}(2)$.
Furthermore, for each $t \in (0,1]$, the cocycle $(R_\alpha,\hat{A}_t)$ is product-bounded; in fact,
\begin{equation}
\sup_{x \in X}  \sup_{n \ge 0} \| \hat{A}_t^{(n)}(x) \| \le 
\sup_{x \in X} \|C_0(x)^{-1} C_t(x)\|^2 \, .
\end{equation}
Let
\begin{equation}
\hat{M}(t) \coloneqq 2 \log  \sup_{x \in X}  \sup_{s \in [t,1]} \|C_0(x)^{-1} C_t(x)\| \, .
\end{equation}
Then $\hat{M} \colon (0,1] \to [0,\infty)$ is continuous, decreasing, and $\hat{M}(1) = 0$.
Given the function $M$ as in the statement of the \lcnamecref{l.slow_ABD}, let $\phi \colon [0,1] \to [0,1]$ be an increasing homeomorphism such that $\hat{M}(\phi(t)) \le M(t)$ for all $t \in (0,1]$.
Then the function $B_t(x) \coloneqq \hat{A}_{\phi(t)}(x)$ has the required properties. 
\end{proof}

%%%%%%%%%%%%%%%%%%%%%%%%%%%%%%%%%%%%%%%%%%%%%%%%%%%%%%%%%%%%%%%%%%%%%%%%%%%%%%%%%%%%%%%%%
\section{Sturmian measures}\label{s.sturmian}

Sturmian sequences and measures have a long history: see \cite[Chapter~2]{Lothaire}, \cite[Chapter~6]{Fogg}, and references therein. 
Sturmian sequences are usually viewed as elements of the symbolic space $\{0,1\}^\N$, and the corresponding measures are shift-invariant.
However, since we are working with the doubling map on the circle, we will introduce Sturmian measures in the corresponding setting. 
Our exposition is self-contained, but it definitely overlaps \cite{Veerman,BSentenac} (e.g., compare \cref{f.Sentenac} below and \cite[Fig.~13]{Veerman}).
 
Let $\alpha$ be an irrational number, which will be fixed from now on.

\subsection{A devil staircase}

We use the standard notations for the floor and ceiling functions, that is, $\lfloor x \rfloor \coloneqq \max \{ n \in \Z \st n \le x\}$ and $\lceil x \rceil \coloneqq \min \{ n \in \Z \st n \ge x\}$.
Let $F \colon \R \to \R$ be defined as follows:
\begin{equation}\label{e.F_def}
F(x) \coloneqq \sum_{n=0}^\infty 2^{-n-1} \lfloor x+n\alpha \rfloor \, .
\end{equation}
The following properties are immediate: 
\begin{align}
F(x+1)&= F(x)+1 \, , \\ 
2F(x) &= \lfloor x \rfloor + F(x+\alpha) \, . \label{e.doubling_F}
\end{align}
The function $F$ is right-continuous and (since $\alpha$ is irrational) strictly increasing. 
Define another function:
\begin{equation}
f(x) \coloneqq \sum_{n=0}^\infty 2^{-n-1} (\lceil  x+n\alpha \rceil -1) \, .
\end{equation}
Then $f \le F$ and $f$ has the same properties listed above, except that 
\begin{equation}\label{e.doubling_f}
2f(x) = \lceil x \rceil - 1 + f(x+\alpha) 
\end{equation}
and $f$ is left-continuous. % instead of right-continuous. 
Furthermore, $F(x) = f(x)$, unless $x = -n\alpha + m$ for some integers $n \ge 0$ and $m$.
The values of the two functions at these points satisfy the relation:
\begin{equation}\label{e.F_gap}
F(-n\alpha+m) = f(-n\alpha+m) + 2^{-n-1} \, , \quad n , m \in \Z, \  n \ge 0 \, .
\end{equation}

Let $\tilde{h} \colon \R \to \R$ be defined by the following condition:
\begin{equation}
\tilde{h}(y) = x \quad \Leftrightarrow \quad 
f(x) \le y \le F(x) \, ;
\end{equation}
see \cref{f.devil}.
Then $\tilde{h}$ is increasing, continuous, and satisfies $\tilde{h}(x+1) = \tilde{h}(x)+1$.
As a consequence of equalities \eqref{e.doubling_F} and \eqref{e.doubling_f}, we have the following property:
\begin{equation}\label{e.tilde_h_property}
\tilde{h}(y) = x \not\in \Z \quad \Rightarrow \quad
\tilde{h}(2y) = \lfloor x \rfloor + x + \alpha \, .
\end{equation}

\begin{figure} %[ht]
\includegraphics[width=.55\textwidth]{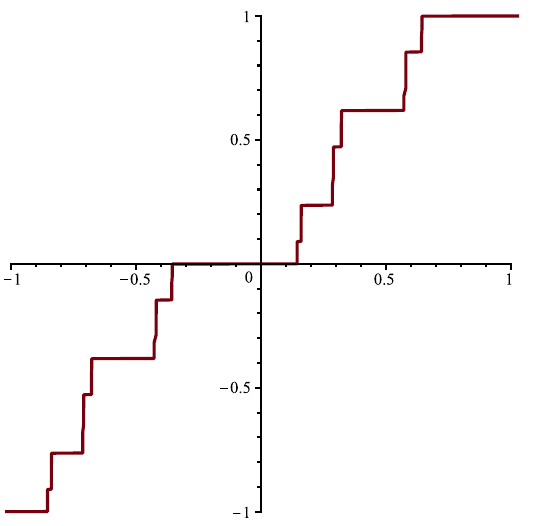}
\caption{Graph of the function $\tilde{h}$ if $\alpha = \frac{3-\sqrt{5}}{2}$.}\label{f.devil}
\end{figure}

\begin{remark}\label{r.mechanical}
Using summation by parts, we can rewrite $F$ as
\begin{equation}
F(x) = \lfloor x \rfloor + \sum_{n=0}^\infty 2^{-n-1} \big( \lfloor x+(n+1)\alpha \rfloor - \lfloor x+n\alpha \rfloor \big) \, ;
\end{equation}
similarly for $f$.
Therefore, if $0<\alpha<1$, then the binary expansions of the fractional parts of $F(x)$ and $f(x)$ constitute ``rotation sequences'' \cite[p.~151]{Fogg}, ``mechanical words'' \cite[p.~53]{Lothaire}, or ``optimal sequences'' \cite[p.~550]{Veerman}. 
\end{remark}

\subsection{Down to the circle}\label{ss.down}

Recall that $\T \coloneq \R/\Z$. 
Let $\pi \colon \R \to \T$ denote the quotient projection.
The Haar measure on the circle $\T$ is  $\mathrm{Leb}_{\T} = \pi_*(\mathrm{Leb}_{[0,1]})$, where $\mathrm{Leb}_{[0,1]}$ denotes Lebesgue measure on the unit interval and the star denotes push-forward.

We consider two dynamical systems on $\T$, the doubling map $D$ 
and the irrational rotation $R = R_\alpha$, which can be defined by the formulas
\begin{equation}
D(\pi(x)) \coloneqq \pi(2x) \, , \quad
R(\pi(x)) \coloneqq \pi(x+\alpha) \, , \quad x\in \R \, .
\end{equation}

Let $h \colon \T \to \T$ be defined by 
\begin{equation}\label{e.def_h}
h(\pi(x)) \coloneqq \pi(\tilde{h}(x)) \, , \quad x \in \R \, ,
\end{equation}
where $\tilde{h}$ is as in the previous subsection.
Then $h$ is continuous and has degree $1$; in particular, it is surjective.
Furthermore, 
\begin{equation}\label{e.Ffh}
h \circ \pi \circ F = h \circ \pi \circ f = \pi \, .
\end{equation}

For each integer $n \ge 0$, define a set  $I_n \subseteq \T$ by
\begin{equation}\label{e.def_I}
	I_n \coloneqq \pi \big( (f(-n\alpha),F(-n\alpha)) \big) \, .
\end{equation}
Equivalently, $I_n$ is the interior of the set $h^{-1}(R^{-n}(0))$.
By property \eqref{e.F_gap}, $I_n$ is an open interval of length $2^{-n-1}$
(where by an \emph{interval} we mean a connected proper subset of $\T$, and \emph{length} is Haar measure).
These intervals have pairwise disjoint closures. 
Property~\eqref{e.tilde_h_property} implies:
\begin{equation}\label{e.conjugacy}
	h \circ D = R \circ h \quad \text{on the semicircle } \T \setminus I_0 \, .
\end{equation}
In particular,
\begin{equation}\label{e.tower_property}
	D(I_n) = I_{n-1} \quad \text{for all } n > 0 \, .
\end{equation}
See \cref{f.Sentenac}.

% https://tex.stackexchange.com/a/218814/5549
\tikzset{
  pics/carc/.style args={#1:#2:#3}{
    code={
      \draw[pic actions] (#1:#3) arc(#1:#2:#3);
    }
  }
}
% Calculations: See Young tower.xls
\begin{figure} %[ht]
	\begin{tikzpicture}[scale=.8]
		%\draw (0,0) circle(1);
		\draw (0,0) pic[Parenthesis-Parenthesis]{carc=232.2:412.2:2.5cm}; % alternative: [Bracket-Bracket]
		\draw ( .791*2.75,-.612*2.75) node{$I_0$};
		\draw (0,0) pic[Parenthesis-Parenthesis]{carc=116.1:206.1:2.5cm};
		\draw (-.946*2.75, .324*2.75) node{$I_1$};
		\draw (0,0) pic[Parenthesis-Parenthesis]{carc= 58.1:103.1:2.5cm};
		\draw ( .164*2.75, .986*2.75) node{$I_2$};
		\draw (0,0) pic[Parenthesis-Parenthesis]{carc=209.0:231.5:2.5cm};
		\draw (-.763*2.75,-.647*2.75) node{$I_3$};
		\draw (0,0) pic[Parenthesis-Parenthesis]{carc=104.5:115.8:2.5cm};
		\draw (-.344*2.75, .939*2.75) node{$I_4$};
		% \draw (0,0) pic[Parenthesis-Parenthesis]{carc= 52.3: 57.9:2.5cm};
		% \draw ( .573*2.75, .820*2.75) node{$I_5$};
		% \draw (0,0) pic[Parenthesis-Parenthesis]{carc=206.1:208.9:2.5cm};
		% \draw (-.887*2.75,-.462*2.75) node{$I_6$};
		%\draw (0,0) pic[Parenthesis-Parenthesis]{carc=103.1:104.5:2.5cm};
		
		\draw[->] (3.75,0)--(4.5,0) node[midway,above]{$h$};
		
		\begin{scope}[xshift=7.75cm]
			\draw (0,0) circle(1);
			\fill (1    ,0    ) circle(.05) node[right]{$0$};
			\fill (-.737,-.675) circle(.05) node[left] {$R_\alpha^{-1}(0)$};
			\fill ( .087, .996) circle(.05) node[above]{$R_\alpha^{-2}(0)$};
			\fill ( .608,-.794) circle(.05) node[right]{$R_\alpha^{-3}(0)$};
			\fill (-.985, .174) circle(.05) node[left] {$R_\alpha^{-4}(0)$};
			% \fill ( .844, .537) circle(.05) node[right] {$R_\alpha^{-5}(0)$};
			% \fill (-.260,-.966) circle(.05) node[below] {$R_\alpha^{-6}(0)$};
		\end{scope}
		
	\end{tikzpicture}
	\caption{Intervals $I_n$ and their images $h(I_n) = \{R_\alpha^{-n}(0)\}$ for $\alpha = \frac{3-\sqrt{5}}{2}$.}
	%\caption{Take $\alpha = \frac{3-\sqrt{5}}{2} = \frac{1}{\tau^2}$, where $\tau = \frac{1+\sqrt{5}}{2} = $ golden ratio. The figure shows the intervals $I_n$ and their (point) images under $h_\alpha$. The characteristic word $c_\alpha$ is the Fibonacci word $0100101001001\dots$ (and it corresponds to a point in the top circle between $I_3$ and $I_5$).}
	\label{f.Sentenac}
\end{figure}
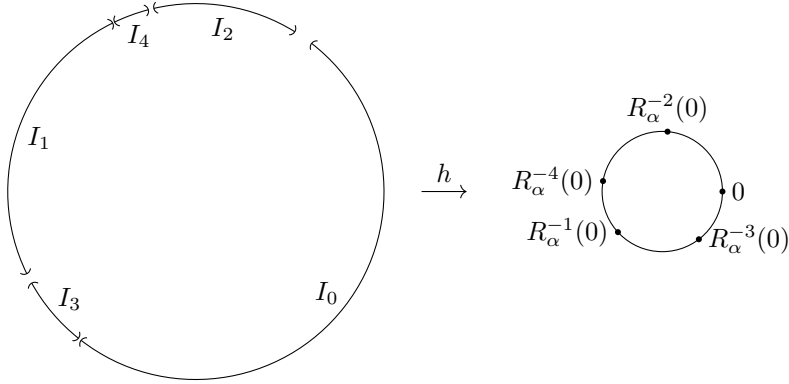

Consider the set
\begin{equation}\label{e.def_K}
	K \coloneqq \T \setminus \bigsqcup_{n \ge 0} I_n \, .
\end{equation}
Equivalently, $K = \bigcap_{n \ge 0} D^{-n}(\T \setminus I_0)$.
Then $K$ is a Cantor set and satisfies $D(K) = K$ and $h(K) = \T$. 

We define the \emph{Sturmian measure with parameter $\alpha$} as 
\begin{equation}
\nu \coloneqq (\pi\circ F)_* (\mathrm{Leb}_{[0,1]}) =  (\pi \circ f)_* (\mathrm{Leb}_{[0,1]}) \, .
\end{equation}
This is a Borel probability measure supported on $K$.
By \eqref{e.Ffh}, this measure satisfies $h_* \nu = \mathrm{Leb}_{\T}$.
It follows from \eqref{e.conjugacy} that $\nu$ is $D$-invariant.
Additionally, as a consequence of the fact that $R$ is uniquely ergodic, $\nu$ is the unique $D$-invariant probability measure whose support is contained in the semicircle $\T \setminus I_0$.

\begin{remark}
If the parameter $\alpha$ is rational, then the construction above can be performed with some adaptations, yielding a measure $\nu$ whose support is a periodic orbit contained in a semicircle. See \cite{BSentenac}.
\end{remark}

%%%%%%%%%%%%%%%%%%%%%%%%%%%%%%%%%%%%%%%%%%%%%%%%%%%%%%%%%%%%%%%%%%%%%%%%%%%%%%%%%%%%%%%%%
\section{Proof of the theorem}

Fix numbers $\alpha \in \R \setminus \Q$ and $c>\epsilon>0$.

\subsection{Step 1: Defining the cocycle}

The \emph{arc-length metric} on the circle $\T$ is: 
\begin{equation}\label{e.d_def}
\mathrm{d}(x,y) \coloneqq \min \big\{|r - s| \st r, s \in \R, \ r \equiv x \bmod \Z, \ s  \equiv y \bmod \Z \big\} \, .
\end{equation}
Using the sets $I_n$ and $K$ constructed in the previous section, we define an auxiliary function $\phi \colon \T \to \R$ as follows:
\begin{equation}\label{e.phi_def}
\phi(x) \coloneqq 
\begin{cases}
2^{n+2} \mathrm{d}(x,\partial I_n)	&\text{if } x \in I_n \, , \ n \ge 0, \\
0							&\text{if } x \in K  \, .
\end{cases}
\end{equation}
See \cref{f.phi}.
By \eqref{e.tower_property}, for each $n>0$, the map $D$ sends the interval $I_n$ onto the interval $I_{n-1}$ doubling arc-length. 
Therefore, for each $n \ge 0$ and $x \in I_n$,
\begin{equation}\label{e.constancy_phi}
\phi(x) = \phi(D x) = \cdots = \phi(D^n x) =  4 \mathrm{d}(D^n x, \partial I_0) \, .
\end{equation}

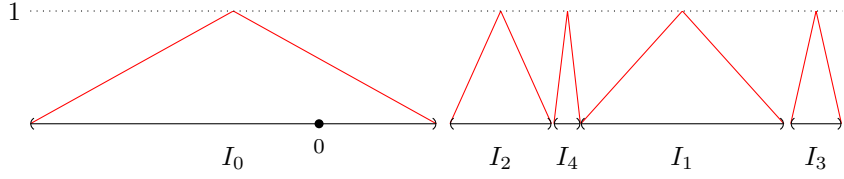
\begin{figure} %[ht]
\begin{tikzpicture}[scale=3] % 1 unit = 100 degrees 

\begin{scope}
\clip(-1.28,0) rectangle (2.32,0.5);
% \clip(0,0) rectangle (3.6,0.5);
\draw[MapleRed,thick](-1.28,0) -- (-.38,0.5) -- (.52,0);
\draw[MapleRed,thick](1.16,0) -- (1.61,0.5) -- (2.06,0);
\draw[MapleRed,thick](.58,0) -- (.805,0.5) -- (1.03,0);
\draw[MapleRed,thick](2.09,0) -- (2.2025,0.5) -- (2.315,0);
\draw[MapleRed,thick](1.0405,0) -- (1.101,0.5) -- (1.158,0);
\end{scope}

\draw[Parenthesis-Parenthesis](-1.28,0) -- (.52,0);
\draw[Parenthesis-Parenthesis](1.16,0) -- (2.06,0);
\draw[Parenthesis-Parenthesis](.58,0) -- (1.03,0);
\draw[Parenthesis-Parenthesis](2.09,0) -- (2.315,0);
\draw[Parenthesis-Parenthesis](1.0405,0) -- (1.158,0);

\node(textI0) at (-.38,-.15) {$I_0$};
\node(textI1) at (1.61,-.15) {$I_1$};
\node(textI2) at (.805,-.15) {$I_2$};
\node(textI3) at (2.2025,-.15) {$I_3$};
\node(textI4) at (1.101,-.15) {$I_4$};

\fill (0,0) circle(.02);
\node(origin) at (0,-.1) {\footnotesize $0$};

\draw[dotted] (-1.28,0.5) node[left] {$1$} --(2.32,0.5);

\end{tikzpicture}
\caption{Graph of the discontinuous function $\phi$ on the first five intervals, for $\alpha = \frac{3-\sqrt{5}}{2}$.}\label{f.phi}
\end{figure}

\begin{remark}
At this point we could define $\tilde A(x) \coloneqq B_{\phi(x)}(h(x))$, 
where $B$ comes from \cref{l.slow_ABD} (the choice of $M$ being immaterial).
Then we would obtain a $\mathrm{SL}(2,\R)$-cocycle such that
$\lambda_1(D,\tilde{A},\nu) = c$ and $\lambda_1(D,\tilde{A},\mu) = 0$ for all ergodic measures $\mu\neq\nu$.
However, this cocycle would be discontinuous.
The continuous cocycle we are looking for consists of a suitably modified version of $\tilde{A}$.
\end{remark}

For each integer $n \ge 0$, let
\begin{equation}\label{e.ell_def}
\ell(n) \coloneqq \lfloor (n+2)^{1/4} \rfloor \, .
\end{equation}
Define another auxiliary function $\psi$ on $\T$ as 
\begin{equation}\label{e.psi_def}
\psi(x) \coloneqq 
\begin{cases}
\phi(x)/\ell(n)	&\text{if } x \in I_n \, , \ n \ge 0, \\
0				&\text{if } x \in K  \, .
\end{cases}
\end{equation}

\begin{lemma}
The function $\psi$ is continuous. 
\end{lemma}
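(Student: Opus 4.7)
The plan is to verify continuity of $\psi$ separately at points of $\T \setminus K$ and at points of $K$. On the open set $\T \setminus K = \bigsqcup_{n \ge 0} I_n$, continuity is immediate: on each open interval $I_n$, the function $\psi$ equals $\phi/\ell(n)$, which is a continuous (piecewise-linear, tent-shaped) function of $x$.

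The substantive content lies in continuity at each point $x_0 \in K$, where I expect to rely on two simple observations. First, the peak value of $\phi$ on $I_n$ is $2^{n+2} \cdot \frac{1}{2} \cdot 2^{-n-1} = 1$, attained at the midpoint, so one has the uniform bound $\psi \le 1/\ell(n)$ on $I_n$. Second, since $\ell(n) \to \infty$ as $n \to \infty$, given $\epsilon > 0$ there exists $N$ such that $1/\ell(n) < \epsilon$ for all $n \ge N$. Consequently $\psi < \epsilon$ on the union $\bigcup_{n \ge N} I_n$, while $\psi = 0$ on $K$ itself.

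It then remains to control $\psi$ on the finitely many ``large-scale'' intervals $I_0,\dots,I_{N-1}$ that lie near $x_0$. Here the key structural fact is that the closures $\bar I_n$ are pairwise disjoint, so the point $x_0 \in K$ lies in $\bar I_n$ for at most one index $n < N$; call this index $n_0$ if it exists. In either case, one can choose a neighborhood $U$ of $x_0$ disjoint from $\bar I_n$ for all $n < N$ with $n \ne n_0$. For $x \in U$ there are three possibilities: $x \in K$, giving $\psi(x) = 0$; $x \in I_n$ with $n \ge N$, giving $\psi(x) < \epsilon$; or $x \in I_{n_0}$, in which case necessarily $x_0 \in \partial I_{n_0}$ and
\begin{equation}
\psi(x) \;=\; \frac{2^{n_0+2}}{\ell(n_0)}\,\mathrm{d}(x,\partial I_{n_0})
\end{equation}
tends to $0$ as $x \to x_0$ within $I_{n_0}$. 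Shrinking $U$ further if necessary yields $\psi(x) < \epsilon$ throughout $U$, which proves continuity at $x_0$.

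I do not expect a genuine obstacle here; the argument is a structured $\epsilon$--$N$ chase in which the slowly growing divisor $\ell(n)$ is precisely engineered to tame the jump discontinuities of $\phi$ at points of $K$. The only subtlety worth flagging is the disjointness of the closures $\bar I_n$, without which the ``third case'' above would require simultaneous handling of two intervals abutting $x_0$.
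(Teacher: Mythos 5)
Your proof is correct and rests on the same key estimate as the paper's, namely that $\sup_{I_n}\psi = 1/\ell(n) \to 0$; the paper merely packages this as uniform convergence of the truncations of $\psi$ to the first $n$ intervals (each of which is continuous, since $\psi$ vanishes on $\partial I_k$), whereas you run the equivalent $\epsilon$--$N$ argument pointwise at each $x_0 \in K$. Both arguments are complete, and your explicit handling of the at most one ``large'' interval whose closure contains $x_0$ is exactly the detail the paper's uniform-limit phrasing absorbs silently.
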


\begin{proof}
For each $n \ge 1$, let $\psi_n \colon \T \to \R$ be the piecewise affine function defined as $\psi_n(x) \coloneqq \psi(x)$ if $ N(x) \le n$ and $\psi_n(x) \coloneqq 0$ otherwise. 
Then $|\psi(x) - \psi_n(x)| \le \frac{1}{\ell(n)}$ for every $x$.
Therefore, $\psi$ is a uniform limit of a sequence of continuous functions, and so it is continuous. 
\end{proof}

For each positive integer $n$, let
\begin{equation}\label{e.delta_def}
\delta(n) \coloneqq \min \big\{ 4 \mathrm{d}(x, \partial I_0) \st x \in I_1 \sqcup \cdots \sqcup I_n \big\} \, .
\end{equation}
Then $1 > \delta(1) \ge \delta(2) \ge \cdots$ and $\delta(n) \to 0$ as $n \to \infty$.
Additionally, since $I_0 \cap D^{-1}(I_n) = I_{n+1} + \tfrac{1}{2}$, we have
\begin{equation}
\label{e.delta_property_3}
x \in I_0 \cap D^{-1}(I_n) \quad \Rightarrow \quad 
4 \mathrm{d}(x, \partial I_0) = 4 \mathrm{d}(x + \tfrac{1}{2}, \partial I_0)  \ge \delta(n+1) \, .
\end{equation}

The values $\frac{\delta(n+1)}{\ell(n)}$ form a strictly decreasing sequence in $[0,1]$ tending to $0$. 
Therefore, we can choose a continuous strictly decreasing function $M \colon (0,1] \to [0,\infty)$ such that
\begin{equation}\label{e.M_control_pts}
M \left( \frac{\delta(n+1)}{\ell(n)}\right) = \frac{\epsilon \,  (n+2)^{1/4}}{\sqrt{2}}  \, . 
\end{equation}
Let $B_t(x)$ be the matrix function (depending on this function $M$) given by \cref{l.slow_ABD}.

Define $A \colon \T \to \mathrm{SL}(2,\R)$ as:
\begin{equation}\label{e.def_A}
A(x) \coloneqq B_{\psi(x)}(h(x)) \, ,
\end{equation}
where $h$ comes from \eqref{e.def_h}.
Since $h$, $\psi$, and $B$ are continuous functions, $A$ is continuous as well.

\subsection{Step 2: Bounding Lyapunov exponents}

The cocycle we have just constructed has the prescribed Lyapunov exponent with respect to the Sturmian measure $\nu$:

\begin{lemma}\label{l.c}
$\lambda_1(D,A,\nu) = c$.
\end{lemma}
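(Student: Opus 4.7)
The plan is to exploit the semiconjugacy \eqref{e.conjugacy} to reduce the Lyapunov exponent of $(D,A)$ with respect to $\nu$ to that of $(R_\alpha, B_0)$ with respect to Lebesgue measure, which equals $c$ by \eqref{e.prescribed_LE}.

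First I would observe from \eqref{e.psi_def} that $\psi$ vanishes identically on $K$, so by \eqref{e.def_A} we have $A(x) = B_0(h(x))$ for every $x \in K$. Because $\mathrm{supp}(\nu) \subseteq K$ we have $\nu(K) = 1$, and the $D$-invariance of $\nu$ gives $\nu(D^{-n}(K)) = 1$ for every $n \ge 0$; hence the set $Z \coloneqq \bigcap_{n \ge 0} D^{-n}(K)$ has full $\nu$-measure. For every $x \in Z$, each iterate $D^k x$ lies in $K \subseteq \T \setminus I_0$, so \eqref{e.conjugacy} yields $h(D^k x) = R_\alpha^k(h(x))$. Combining these two facts gives
$$ A_D^{(n)}(x) \;=\; B_0\bigl(R_\alpha^{n-1}(h(x))\bigr) \cdots B_0(h(x)) \;=\; (B_0)_{R_\alpha}^{(n)}(h(x)) $$
for all $x \in Z$ and $n \ge 0$.

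Next I would take Lyapunov exponents. Since $R_\alpha$ is uniquely ergodic, Lebesgue measure is ergodic for $R_\alpha$; therefore, combining Oseledets's theorem with \eqref{e.prescribed_LE}, the function $y \mapsto \lambda_1(R_\alpha, B_0, y)$ equals $c$ on a set $Y \subseteq \T$ of full Lebesgue measure. Since $h_* \nu = \mathrm{Leb}$, the preimage $h^{-1}(Y)$ has full $\nu$-measure, and so $\lambda_1(D,A,x) = \lambda_1(R_\alpha, B_0, h(x)) = c$ for every $x \in Z \cap h^{-1}(Y)$. Integrating with respect to $\nu$ yields $\lambda_1(D,A,\nu) = c$.

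The argument is essentially a bookkeeping exercise, and there is no serious obstacle. The only point requiring a moment's thought is that every $\nu$-typical orbit remains in $K$ for all positive iterates, which is automatic from the $D$-invariance of $\nu$. The genuine analytic work in the paper will appear in the subsequent lemmas, which must control Lyapunov exponents on the complement of $\mathrm{supp}(\nu)$ by using the bound \eqref{e.B_and_M} together with the calibration \eqref{e.M_control_pts}.
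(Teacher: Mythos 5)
Your argument is correct and is essentially identical to the paper's proof: both observe that $\psi$ vanishes on $K$, so that on the full $\nu$-measure set $K$ the cocycle $(D,A)$ coincides, via $h$, with $(R_\alpha,B_0)$, and then push forward using $h_*\nu=\mathrm{Leb}$ and \eqref{e.prescribed_LE}. (Your set $Z$ is in fact all of $K$, since $D(K)=K$ gives $K\subseteq D^{-n}(K)$ for every $n$.)
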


\begin{proof}
Since $\psi$ vanishes on $K$, it follows from definition \eqref{e.def_A} and  property \eqref{e.conjugacy} that 
\begin{equation}
A^{(n)}_D(x) = (B_0)^{(n)}_R(h(x)) \quad \text{for all $x \in K$ and $n \ge 0$.}
\end{equation}
Since $\nu(K)=1$ and $h_* \nu = \mathrm{Leb}$, we obtain $\lambda_1(D,A,\nu) = \lambda_1(R,B_0,\mathrm{Leb})$, which by equation~\eqref{e.prescribed_LE} equals $c$.
\end{proof}

\begin{lemma}\label{l.key}
Let $n\ge 0$, $m\ge 0$.
Then, for all $x \in I_n \cap D^{-(n+1)}(I_m)$, we have
\begin{equation}\label{e.key}
\log \|A^{(n+1)}(x) \|  \le 
\epsilon \, \sqrt{\frac{n+m+2}{2}} \, .
\end{equation}
\end{lemma}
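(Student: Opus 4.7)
Let $y := h(x)$. The plan is to decompose the product $A^{(n+1)}(x)$ into $L := \ell(n)$ chunks on which the parameter $t_j := \psi(D^j x)$ is constant, so that each chunk is an iterate of a single cocycle $(R, B_t)$ whose norm is controlled by \eqref{e.B_and_M}. Since $D^j x \in I_{n-j}$ for $j = 0, \ldots, n-1$ avoids $I_0$, the semiconjugacy \eqref{e.conjugacy} gives $h(D^j x) = R^j y$ for $j = 0, \ldots, n$; combined with \eqref{e.constancy_phi} and $\ell(0) = 1$, this yields $t_j = \phi(x)/\ell(n-j)$ throughout. As $j$ ranges over $\{0, \ldots, n\}$, the integer $\ell(n-j)$ takes each value $\ell \in \{1, \ldots, L\}$ on a consecutive block of indices. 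On the block where $\ell(n-j) = \ell$, the product of the corresponding factors equals $(B_{\phi(x)/\ell})^{(k_\ell)}_R(R^{j_\ell} y)$ for appropriate $k_\ell, j_\ell$, and \eqref{e.B_and_M} bounds its norm by $e^{M(\phi(x)/\ell)}$. Multiplying over all $L$ blocks,
\begin{equation*}
\log \|A^{(n+1)}(x)\| \le \sum_{\ell=1}^{L} M\bigl(\phi(x)/\ell\bigr).
\end{equation*}

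The hypothesis $D^{n+1} x \in I_m$ combined with $D^n x \in I_0$ gives, via \eqref{e.delta_property_3}, $\phi(x) = 4\mathrm{d}(D^n x, \partial I_0) \ge \delta(m+1)$. Setting $k := \max(m, \ell^4 - 2)$, the conditions $\ell(k) \ge \ell$ and $\delta(k+1) \le \delta(m+1)$ imply $\delta(k+1)/\ell(k) \le \delta(m+1)/\ell \le \phi(x)/\ell$; since $M$ is decreasing, \eqref{e.M_control_pts} then yields
\begin{equation*}
M(\phi(x)/\ell) \le \frac{\epsilon (k+2)^{1/4}}{\sqrt{2}} = \frac{\epsilon}{\sqrt{2}} \max\bigl(\ell, (m+2)^{1/4}\bigr).
\end{equation*}
Writing $a := (m+2)^{1/4}$ and using $\max(\ell, a) \le \max(L, a)$ for $\ell \le L$, we obtain $\sum_{\ell=1}^L \max(\ell, a) \le L \max(L, a) = \max(L^2, La)$. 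Since $L \le (n+2)^{1/4}$, $L^2 \le \sqrt{n+2} \le \sqrt{n+m+2}$, while AM-GM gives $(La)^2 \le \sqrt{(n+2)(m+2)} \le (n+m+4)/2 \le n+m+2$. Combining these estimates produces $\log \|A^{(n+1)}(x)\| \le \epsilon \sqrt{(n+m+2)/2}$, as required.

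The main obstacle is that the naive single-step bound $\sum_{j=0}^n M(t_j)$ has the wrong growth rate: its individual terms are of order $\epsilon (n-j+m)^{1/4}$, and they would sum to order $\epsilon(n+m)^{5/4}$, dwarfing the target. The saving comes from the chunking, which turns an $(n+1)$-term sum into an $L$-term sum with $L \sim n^{1/4}$; the exponent $1/4$ in the definition $\ell(n) = \lfloor (n+2)^{1/4} \rfloor$ is calibrated precisely so that the final arithmetic bound reads $\sqrt{n+m+2}$.
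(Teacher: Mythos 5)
Your proof is correct and uses the same key idea as the paper: partition the product $A^{(n+1)}(x)$ into $L=\ell(n)$ consecutive blocks on which $\psi$ is constant, bound each block via \eqref{e.B_and_M}, and thereby reduce everything to estimating $\sum_{\ell=1}^{L} M(\phi(x)/\ell)$. The only divergence is in the final arithmetic---you bound each term separately via the auxiliary index $k=\max(m,\ell^4-2)$ and then sum the resulting $\max(\ell,(m+2)^{1/4})$ terms, whereas the paper bounds the entire sum at once by $L\,M(\phi(x)/L)$ and then invokes the monotonicity of $\delta$ and $\ell$ to pass to the control point $\delta(n+m+1)/\ell(n+m)$---but both routes reach the identical bound $\epsilon\sqrt{(n+m+2)/2}$.
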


\begin{proof}
Fix $x \in I_n \cap D^{-(n+1)}(I_m)$.
Write
\begin{equation}\label{e.climbing}
A^{(n+1)}(x) = A(x_0) A(x_1) \cdots A(x_n) \, , \quad
\text{where } x_k \coloneqq D^{n-k} x \, .
\end{equation}
Note that $x_k \in I_k$ for each $k \in \{0,\dots,n\}$.

Let $t \coloneqq 4 \mathrm{d}(x_0 , \partial I_0)$. 
By property~\eqref{e.constancy_phi}, $\phi(x_k) = t$ for all $k \in \{0,\dots,n\}$.
Since $x_0 \in I_0 \cap D^{-1}(I_m)$, by property~\eqref{e.delta_property_3} we have 
\begin{equation}\label{e.win}
t \ge \delta(m+1) \, .
\end{equation}

Let $s \coloneqq \ell(n)$.
Note that $s^4-2 \le n$.
Define numbers $n_1 < n_2 < \cdots < n_{s+1}$ as:
\begin{equation}
n_j \coloneqq 
\begin{cases}
0		&\text{if $j=1$,}\\
j^4-2	&\text{if $2 \le j \le s$,}\\
n+1		&\text{if $j=s+1$.}
\end{cases}
\end{equation}
Then, for each $j \in \{1,\dots,s\}$,
\begin{equation}\label{e.constancy_ell}
n_j \le k < n_{j+1} \quad \Rightarrow \quad \ell(k) = j \, .
\end{equation}
Rewrite the product \eqref{e.climbing} as
\begin{gather}
\label{e.P_factorization}
A^{(n+1)}(x) = P_1 P_2 \cdots P_s \, , \quad \text{where} \\
P_j \coloneqq A(x_{n_j}) A(x_{n_j+1}) \cdots A(x_{n_{j+1}-1}) \, . % = A^{(n_j - n_{j-1})}(x_{n_j}) \, .
\end{gather}

Fix $j \in \{1,\dots,s\}$, and let us bound $\|P_j\|$.
It follows from property \eqref{e.constancy_ell} that 
\begin{equation}\label{e.constancy_psi}
n_j \le k < n_{j+1} \quad \Rightarrow \quad \psi(x_k) = \frac{t}{j} \, .
\end{equation}
Using the fact that $h|_{I_k} \equiv R^{-k}(0)$, definition \eqref{e.def_A} gives:
\begin{equation}
n_j \le k < n_{j+1} \quad \Rightarrow \quad A(x_k) = B_{t/j}(R^{-k} (0)) \, .
\end{equation}
Therefore
\begin{equation}
P_j = B_{t/j}(R^{-n_j} (0)) B_{t/j}(R^{-n_j+1} (0)) \cdots B_{t/j}(R^{-n_{j+1}+1} (0)) \, .
\end{equation}
By the fundamental property \eqref{e.B_and_M},
\begin{equation}
\|P_j\| \le e^{M(t/j)} \, .
\end{equation}

Now we can bound $\log \|A^{n+1}(x) \| \le \sum_{j=1}^s M(t/j)$.
%\begin{equation}
%\log \|A^{n+1}(x) \| \le \sum_{j=1}^s M(t/j) \, . 
%\end{equation}
Since $M(\mathord{\cdot})$ is decreasing,
\begin{equation}
\log \|A^{n+1}(x) \| 
\le s M(t/s) 
\overset{\eqref{e.win}}{\le}
(n+2)^{\frac{1}{4}}  M\left(\frac{\delta(m+1)}{\ell(n)}\right) \, ,
\end{equation}
Since $\delta(\mathord{\cdot})$ is decreasing and $\ell(\mathord{\cdot})$ is increasing,
\begin{equation}
\log \|A^{n+1}(x) \| 
\le (n+m+2)^{\frac{1}{4}}  M\left(\frac{\delta(n+m+1)}{\ell(n+m)}\right) 
\overset{\eqref{e.M_control_pts}}{=} \epsilon \, \sqrt{\frac{n+m+2}{2}} \, . \qedhere
\end{equation}
% Using equation \eqref{e.M_control_pts}, we obtain \eqref{e.key}
\end{proof}

\begin{lemma}\label{l.LE_bound}
Let $\mu$ be a $D$-invariant ergodic Borel probability measure such that $\mu \neq \nu$.
Then 
\begin{equation}\label{e.LE_bound}
\lambda_1(D,A,\mu) \le \epsilon \sqrt{\mu(I_0)} \, .
\end{equation}
\end{lemma}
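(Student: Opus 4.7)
The plan is to decompose orbits of $\mu$-generic points into blocks that fit the hypothesis of \cref{l.key}, bound each block via that lemma, and then apply Cauchy--Schwarz together with Birkhoff's ergodic theorem. The blocks are the excursions between consecutive visits to $I_0$, which by the tower structure \eqref{e.tower_property} always descend $I_n \to I_{n-1} \to \cdots \to I_0$.

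First I would verify that $\mu(I_0)>0$. Since $D(K)=K$ and $\mu$ is $D$-invariant, $\mu(D^{-1}(K)\setminus K)=0$, so $K$ is invariant modulo $\mu$, and ergodicity forces $\mu(K)\in\{0,1\}$. The case $\mu(K)=1$ would make $\mu$ supported in the closed set $\T\setminus I_0$, hence equal to $\nu$ by the uniqueness statement at the end of \cref{ss.down}; since $\mu\ne\nu$, we conclude $\mu(K)=0$. The relation $D^{-1}(I_n)=I_{n+1}\sqcup(I_{n+1}+\tfrac12)$ with $I_{n+1}+\tfrac12\subset I_0$ (cf.\ \eqref{e.delta_property_3}) yields $\mu(I_n)\ge\mu(I_{n+1})$ for every $n$, and $\sum_n\mu(I_n)=1$ then forces $\mu(I_0)>0$.

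Next, I would fix a $\mu$-generic $x$ that never enters $K$ (a full-measure condition, since $\mu(\bigcup_n D^{-n}K)=\mu(K)=0$), let $t_0<t_1<\cdots$ be its visit times to $I_0$, set $N_0\coloneqq 0$ and $N_j\coloneqq t_{j-1}+1$ for $j\ge 1$, and let $n_j\ge 0$ be the unique integer with $D^{N_j}x\in I_{n_j}$. Then $N_{j+1}-N_j=n_j+1$. Applying \cref{l.key} to $P_j\coloneqq A_D^{(n_j+1)}(D^{N_j}x)$ yields $\log\|P_j\|\le\epsilon\sqrt{(n_j+n_{j+1}+2)/2}$, and decomposing $A_D^{(N_J)}(x)=P_{J-1}\cdots P_0$ and applying Cauchy--Schwarz gives
\begin{equation*}
\log\|A_D^{(N_J)}(x)\|\le \epsilon\sum_{j=0}^{J-1}\sqrt{\tfrac{n_j+n_{j+1}+2}{2}}\le \epsilon\sqrt{J\cdot S_J},
\end{equation*}
where $S_J=\tfrac{N_J+N_{J+1}}{2}-\tfrac{n_0+1}{2}\le N_{J+1}$ by direct computation. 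Dividing by $N_J$,
\begin{equation*}
\frac{\log\|A_D^{(N_J)}(x)\|}{N_J}\le \epsilon\sqrt{\tfrac{J}{N_J}\cdot\tfrac{N_{J+1}}{N_J}}.
\end{equation*}

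The main subtlety is controlling the two factors inside the square root. Birkhoff applied to $\mathbf{1}_{I_0}$ gives $J/N_J\to\mu(I_0)$. For the other factor, I would note $N_{J+1}/N_J=1+(n_J+1)/N_J$ and show $(n_J+1)/N_J\to 0$ as follows: Kac's formula gives $\int_{I_0}\tau\,d\mu=1$ for the return-time function $\tau$, so Birkhoff on the induced system yields $\tfrac1J\sum_{j<J}(n_j+1)\to 1/\mu(I_0)$; since this Cesaro average converges, the individual terms satisfy $(n_J+1)/J\to 0$, and hence $(n_J+1)/N_J\to 0$. This produces
\begin{equation*}
\limsup_{J\to\infty}\frac{\log\|A_D^{(N_J)}(x)\|}{N_J}\le \epsilon\sqrt{\mu(I_0)}.
\end{equation*}
Since Kingman's subadditive ergodic theorem ensures that $\tfrac{1}{N}\log\|A_D^{(N)}(x)\|$ converges $\mu$-a.e.\ to $\lambda_1(D,A,\mu)$, this limit must agree with the one along the subsequence $N_J$, giving the claimed bound \eqref{e.LE_bound}.
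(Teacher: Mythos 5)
Your proof is correct and follows essentially the same approach as the paper: partition the orbit into excursions between consecutive visits to $I_0$, bound each excursion via \cref{l.key}, apply Cauchy--Schwarz, and let Birkhoff's theorem control the visit frequency $J/N_J \to \mu(I_0)$. The only inessential detour is the appeal to Kac's formula to get $N_{J+1}/N_J \to 1$; this already follows from $J/N_J \to \mu(I_0) > 0$ (since $N_J/J$ and $N_{J+1}/(J+1)$ converge to the same finite limit), which is all the paper uses, but your argument is also valid.
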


\begin{proof}
Since $\nu$	is the unique $D$-invariant probability measure supported on $\T \setminus I_0$ 
and $\mu \neq \nu$, we have $\mu(I_0)>0$. 
Since $\mu$ is ergodic, the orbit $(D^k x)_{k\ge 0}$ of $\mu$-almost every point $x \in \T$ hits the set $I_0$ infinitely many times;
furthermore, if $k_0 < k_1 < k_2 < \cdots$ denote the hitting times, then
\begin{equation}\label{e.frequency}
\lim_{i \to \infty} \frac{i}{k_i} % = \lim_{j \to \infty} \frac{1}{k_j} \sum_{i=0}^{k_j-1} \chi_{I_0}(D^i x) 
= \mu(I_0) \, ,
\end{equation}
by Birkhoff's theorem. 
Fix a point $x$ for which these properties hold. 
For each $i\ge 0$, let
\begin{equation}
x_i \coloneqq D^{k_i + 1} x
\quad \text{and} \quad 
n_i \coloneqq k_{i+1} - k_i \, .
\end{equation}
Note that $x_i \in I_{n_i-1}$.
Then we can apply \cref{l.key} with $(x_i, n_i-1, n_{i+1}-1)$ in place of $(x,n,m)$, obtaining 
\begin{equation}
\log\|A^{(n_i)}(x_i)\| \le \epsilon \, \sqrt{a_i}  \, , 
\quad \text{where } 
a_i \coloneqq \frac{n_i+n_{i+1}}{2} \, .
\end{equation}
Letting $j \ge 1$, consider the factorization
\begin{equation}
A^{(k_j + 1)}(x) = A^{(n_{j-1})}(x_{j-1})  \cdots A^{(n_1)}(x_1) \, A^{(n_0)}(x_0) \,  A^{(k_0+1)}(x) \, ,
\end{equation}
Let $C \coloneqq \log \|A^{(k_0+1)}(x)\|$.
Then:
\begin{align}
\log \|A^{(k_j + 1)}(x)\| 
&\le C + \epsilon \left( \sqrt{a_0} + \cdots + \sqrt{a_{j-1}} \right) \\
&\le C + \epsilon \sqrt{j} \, \sqrt{a_0 + \cdots + a_{j-1}}   \, ,
\end{align}
by the Cauchy--Schwarz inequality. 
Since $a_0 + \cdots + a_{j-1} < n_0 + \cdots + n_j \le k_{j+1}$, we obtain
\begin{equation} 
\frac{\log \|A^{(k_j + 1)}(x)\|}{k_j}  
\le 
\frac{C}{k_j} + 
\epsilon \, \frac{j}{k_j} \, \sqrt{\frac{k_{j+1}}{j}} \, .
\end{equation}
For $\mu$-a.e.\ $x$, as $j \to \infty$, the left hand side converges to the Lyapunov exponent $\lambda_1(D,A,\mu)$  while, thanks to \eqref{e.frequency}, the right hand side converges to $\epsilon \sqrt{\mu(I_0)}$.
So we obtain the announced inequality \eqref{e.LE_bound}. 
\end{proof}

\begin{proof}[Proof of \cref{t.main}]
Statement~\eqref{i.main1} was already checked in \cref{l.c}, 
while statement~\eqref{i.main2} follows immediately from \cref{l.LE_bound}.
As for the last statement, suppose that $\mu_k \to \nu$ (weakly), with each $\mu_k \neq \nu$.
Since $\nu(\partial I_0) = 0$, it follows that $\mu_k(I_0) \to \nu(I_0) = 0$ (see e.g.\ \cite[Thrm.~II.6.1(e)]{Partha}). 
Now \cref{l.LE_bound} gives $\lambda_1(D,A,\mu_k) \to 0$.
\end{proof}

%%%%%%%%%%%%%%%%%%%%%%%%%%%%%%%%%%%%%%%%%%%%%%%%%%%%%%%%%%%%%%%%%%%%%%%%%%%%%%%%%%%%%%%%%
\section{Questions}

Does there exist a continuous $\mathrm{SL}^{\smallpm}(2,\R)$-cocycle over some expanding or hyperbolic base dynamics admitting a unique ergodic measure with non-zero Lyapunov exponents? 
Can such a cocycle have trivial periodic data in the sense of \cite{Kalinin}? 
Perhaps it is possible to construct an example by adapting a few ideas from \cite{Bochi_mono}, 
but the author is currently not able to overcome all difficulties. 
For a related open problem, see \cite[{\S}4.5]{BPS}.

In this paper, Sturmian measures were convenient for two reasons: first, the relation with irrrational rotations allowed us to use \cref{p.NUH}; second, the complement of the support of a Sturmian measure has an especially simple structure. It is conceivable that the construction can be adapted to other invariant measures. For what ergodic measures is the statement of \cref{t.main} valid? Can we characterize such measures allowing arbitrary expanding or hyperbolic base dynamics?

A more challenging problem, in the spirit of the flexibility program (see \cite{BKRH}), is as follows:
Fixed an expanding or hyperbolic base dynamics $T$, is it possible to fully describe the collection of the functions $\lambda_1(T,A,\mathord{\cdot}) \colon \mathcal{M}_T \to [0,\infty)$, where $A$ varies in $C^0(X,\mathrm{SL}^{\smallpm}(2,\R))$? Those functions are upper-semicontinuous and affine; are there any other restrictions?

Is it possible for derivative cocycles to exhibit the behavior described in this paper?
Does there exist a $C^1$-diffeomorphism $f$ of a compact surface and a nontrivial homoclinic class $\Lambda$ such that $f|_{\Lambda}$ admits an ergodic measure $\nu$ whose Lyapunov exponents are isolated from the Lyapunov exponents of all other ergodic measures $\mu$ in the sense that 
\begin{equation}
\lambda_1(f,\nu) > \epsilon \ge \lambda_1(f,\mu) \ge \lambda_2(f,\mu) \ge -\epsilon > \lambda_2(f,\nu) \quad \text{?}
\end{equation}

In a different direction, is it possible to identify the optimal modulus of continuity of the cocycle under which the Periodic Approximation Theorem~\ref{t.PAT} holds?

%%%%%%%%%%%%%%%%%%%%%%%%%%%%%%%%%%%%%%%%%%%%%%%%%%%%%%%%%%%%%%%%%

\medskip

\noindent \textbf{Acknowledgement.} %: acknowledgements
I thank the referee for his/her suggestions and corrections.

%%%%%%%%%%%%%%%%%%%%%%%%%%%%%%%%%%%%%%%%%%%%%%%%%%%%%%%%%%%%%%%%%%%%%%%%%%%%%%%%%%%%%%%%%

\end{document}